\crefname{hypothesis}{Hypothesis}{Hypotheses}
\title{ On an approximation of average cost per unit time impulse control of Markov processes}
\author{{\L}ukasz Stettner\thanks{Institute of Mathematics, Polish Academy of Sciences, Warsaw, Poland,
  (\email{l.stettner@impan.pl});  research supported by NCN grant 2016/23/B/ST1/00479.}}
\newcommand{\vep}{\varepsilon}
\DeclareMathOperator*{\argmin}{arg\,min}
\def\ee{\mathbb{E}}
\def\prob{\mathbb{P}}
\def\tao{\tau_{\cal{O}}}
\def\om{{\cal{O}}_m}
\def\taom{\tau_{{\cal{O}}_m}}
\def\taomk{\tau_{{\cal{O}}_{m_k}}}
\def\ep{\epsilon}
\newcommand\ind[1]{1_{#1}}
\def\namedlabel#1#2{\begingroup
    #2%
    \def\@currentlabel{#2}%
    \phantomsection\label{#1}\endgroup
}
\begin{document}

\maketitle

% REQUIRED
\begin{abstract}
In this paper we consider impulse control of continuous time Markov processes with average cost per unit time functional.
This problem is approximated using impulse control problems stopped at the first exit time from increasing sequence of open sets.  We find solution to Bellman equation corresponding to the original problem and show that stopped impulse control problems approximate optimal value of the cost functional.
\end{abstract}

% REQUIRED
\begin{keywords}
 Optimal stopping, Bellman equation, impulse control, average cost per unit time
\end{keywords}

 %REQUIRED
\begin{AMS}
 93E20, 49N60, 93C10, 60J25
\end{AMS}

\section{Introduction}\label{S:introduction}
Let $(X_t)$ be a Feller-Markov process on $(\Omega, F,(F_t))$ taking values in a locally compact space $E$ with metric $\rho$ and Borel $\sigma$ field ${\cal E}$. The process starting from $x$ at time $0$ generates a probability measure $\prob_x$ and we denote by $\ee_x$ a related expectation operator. We shall assume that Markov semigroup $T_t$ defined for $h\in C(E)$, the space of continuous bounded functions on $E$, as $T_th(x)=\ee_x\left\{h(X_t)\right\}$ transforms continuous functions vanishing at infinity $C_0(E)$ into itself.  Process $(X_t)$ is controlled by impulses $V=(\tau_i, \xi_i)$ consisting of an increasing sequence of stopping times $\tau_i$ and impulses $\xi_i$, which is adapted to the available observation till time $\tau_i$: at time $\tau_i$ the process is shifted from the state $X_{\tau_i}$ to the state $\xi_i$ at the cost of $c(X_{\tau_i}, \xi_i)\geq c>0$ and follows its dynamics until the next impulse. We assume that impulses shift the process to a compact set $U \subseteq E$, i.e., $\xi \in U$ and the cost function $c(x,\xi)$ is positive, continuous bounded and uniformly bounded away from zero by a constant $c>0$. Furthermore to avoid multiple impulses we assume that for $x\in E$ and $\xi,\xi'\in U$ we have $c(x,\xi)\leq c(x,\xi')+c(\xi',\xi)$. A strategy $V=(\tau_i, \xi_i)$ is \emph{admissible} for $x \in E$ if $\tau_i$ form an increasing sequence of stopping times (possibly taking the value $\infty$) with $\lim_{i \to \infty} \tau_i = \infty$, $\prob_x$-a.s. To describe the evolution of the controlled process we introduce a construction of \cite[Section 2]{Stettner1983a}, which follows ideas of \cite{Rob1978}. Namely, we consider $\Omega=D(R^+,E)^\infty$, where $D(R^+,E)$ is a canonical space of right continuous, left limited functions on $R^+$ taking values in $E$. We assume that $(F^1_t)$ is a canonical filtration on $D(R^+,E)$  and inductively define $F^{n+1}_t=F^n_t \otimes F_t$. The stopping times  $\tau_i$ are adapted $F^i_t \times \left\{\emptyset,D(R^+,E)\right\}^\infty$ while the impulses $\xi_i$ are adapted to $F^i_{\tau_i} \times \left\{\emptyset,D(R^+,E)\right\}^\infty$.
The trajectory of the controlled process $(Y_t)$ is defined using coordinates $x^n$ of the canonical space $\Omega$, i.e. $Y_t=x^n_t$ for $t\in [\tau_{n-1},\tau_n)$, with $\tau_0=0$. Given an impulse strategy $V$ following \cite[Section 2]{Stettner1983a} and \cite[Chapter 5 and Appendix 2]{Rob1978} we define a probability measure $\prob^V$ on $\Omega$.
Our goal is to minimize over all admissible strategies the functional
\begin{equation}\label{eqn:weaker_functional}
{J}\big(x, (\tau_i, \xi_i)\big) = \liminf_{n \to \infty} \frac{1}{\ee_x^V\{\tau_n\}} \ee_x^V \Big\{ \int_0^{\tau_n} f(Y_s) ds + \sum_{i = 1}^n c(x_{\tau_i}^i, \xi_i) \Big\},
\end{equation}
assuming that  $(\tau_n)$ are such that $\ee_x\{\tau_n\}<\infty$,
where $f$ is a continuous bounded function and $x_{\tau_i}^i$ is the state of the process before the $i$-th impulse with a natural meaning if there is more than one impulse at the same time. Alternatively we can consider average cost per unit time functional
\begin{equation}\label{eqn:functional}
\hat{J}\big(x, (\tau_i, \xi_i)\big) = \liminf_{T \to \infty} \frac{1}{T} \ee_x^V \Big\{ \int_0^{T} f(Y_s) ds + \sum_{i = 1}^\infty \ind{\tau_i\leq T} c(x_{\tau_i}^i, \xi_i) \Big\}.
\end{equation}

Impulse control of random systems is intensively studied from various points of view. Such control problems have many applications: in finance (cash management, portfolio optimization) \cite{Korn}, \cite{Belak}, control of the exchange rate \cite{Mun}, optimal harvesting \cite{Wil}, \cite{Alv}, inventory control \cite{Ben2012}  (see also the first chapter of \cite{BenLio1984} for other applications). Impulse control of diffusions and diffusions with jumps was studied in \cite{BenLio1984}, \cite{Bay2013}. Average cost per unit time impulse control problems were studied first for fixed cost of impulses in \cite{Rob1981}, \cite{Robin1983} under uniform ergodicity assumption and then for so called separated cost in \cite{Stettner1983a} and in \cite{Ste1986} under quasicompactness of transition semigroup. The problem was also studied under some compactness conditions in \cite{GS}. Ergodic impulse control of diffusion processes in a bounded domain was studied in \cite{LionsPert1986} and \cite{Pert1988}. An extension to an unbounded domain (real line) was then considered in \cite{JZ}. Average cost per unit time impulse control for Levy process was studied in \cite{Ch202}. In the case of compact state space and constraints the average cost per unit time impulse control problem was studied in \cite{MenRob2018}. This kind of cost functional is also important in mathematics of finance under the name of Kelly criterion - see \cite{Mac2011}. In particular impulse control appear in the case of growth optimal portfolio under proportional transaction costs in \cite{DPS} and \cite{Ch2017} and references there in. Average cost per unit time impulse control in a general (locally compact separable) state space was considered in \cite{PalSte2017}. This paper is an extension of the last paper. Namely in the paper \cite{PalSte2017} discounted approximation approach was considered. In this paper we consider another approach based on stopped impulse control with increasing sequence of domains. We use stopped impulse control approach i.e. we stop the controlled process when we enter a boundary of an open set. We consider a sequence of open sets increasing to the whole space and obtain a sequence  of impulse control problems which as we show approximate original nonrestricted control problem. This way we obtain a nice approximation procedure to original impulse control problem. As one can see e.g. in \cite{Ben2012} unrestricted impulse control problem with average cost per unit time functional is difficult to solve numerically even for rather simple inventory models. The method proposed in the paper seems to be natural approximation procedure which justifies computational intuition. In the paper we use stopping results from the paper \cite{PalSte2011}.
We consider first the functional in a weak form \eqref{eqn:weaker_functional} and then study its general form \eqref{eqn:functional}.
Several nontrivial results concerning ergodic optimal stopping are of independent interest and are shown in the appendix.

\section{Stopped impulse control problems}\label{sec stopped problem}
Let $\cal{O}$ be an open set in $E$ such that $\sup_{x\in \cal{O}}\ee_x\left\{(\tau_{\cal{O}})^2\right\}<\infty$, for $\tau_{\cal{O}}=\inf\left\{s\geq 0: X_s\in E\setminus \cal{O}\right\}$, which means that $\tau_{\cal{O}}$ is the first exit time from $\cal{O}$. We shall assume that stopped semigroup $T_t^{\cal{O}}h(x):=\ee_x\left\{h(X_t)\ind{t<\tau_{\cal{O}}}\right\}$ transforms $C(E)$ into itself.
Consider stopped impulse functional

\begin{equation}\label{stopped1}
{J}^{\cal{O}}\big(x, (\tau_i, \xi_i)\big)= \liminf_{n \to \infty} \frac{1}{\ee_x^V\left\{\tau_n\wedge \tao \right\}} \ee_x^V \Big\{ \int_0^{\tau_n\wedge \tao} f(Y_s) ds + \sum_{i = 1}^n \ind{\tau_i < \tau_{\cal{O}}}c(x_{\tau_i}^i, \xi_i) \Big\}.
\end{equation}

%and
%\begin{equation}\label{stopped2}
%\hat{J}^{\cal{O}}\big(x, (\tau_i, \xi_i)\big) = \liminf_{T \to \infty} \frac{1}{\ee_x^V\Big\{T\wedge \tao \Big\}} \ee_x^V \Big\{ \int_0^{T\wedge %\tao} f(Y_s) ds + \sum_{i = 1}^\infty \ind{\tau_i\leq T} c(x_{\tau_i}^i, \xi_i) \Big\}
%\end{equation}

Let for $\alpha>0$

\begin{equation}\label{eqn.1}
\lambda_\alpha:=\inf_{x\in U}\inf_{V} {\ee_x^{V}\Big\{\int_0^{\tao}e^{-\alpha s} f(Y_s)ds + \sum_{i = 1}^\infty \ind{\tau_i < \tau_{\cal{O}}}e^{-\alpha \tau_i} c(x_{\tau_i}^i, \xi_i)\Big\} \over \ee_x^{V}\Big\{\int_0^{\tao}e^{-\alpha s} ds\Big\}},
\end{equation}
\begin{equation}\label{eqn.2}
w_\alpha(x):= \inf_{V} \ee_x^{V}\Big\{\int_0^{\tao} e^{-\alpha s} (f(Y_s)-\lambda_\alpha) ds + \sum_{i = 1}^\infty \ind{\tau_i < \tau_{\cal{O}}} e^{-\alpha \tau_i} c(x_{\tau_i}^i, \xi_i)\Big\},
\end{equation}
where $V$ stands for impulse strategy $(\tau_i,\xi_i)$.
We clearly have that

\begin{lemma}\label{lem1} For each $\alpha>0$ we have
\begin{equation}\label{eq2}
\inf_{x\in U} w_\alpha(x)=0.
\end{equation}
\end{lemma}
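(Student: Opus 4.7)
The plan is to prove both inequalities $\inf_{x \in U} w_\alpha(x) \geq 0$ and $\inf_{x \in U} w_\alpha(x) \leq 0$ directly from the variational definitions of $\lambda_\alpha$ and $w_\alpha$. For convenience, for $x \in U$ and a strategy $V$ write
$$
A(x,V) := \ee_x^V\Big\{\int_0^{\tao} e^{-\alpha s} f(Y_s)\,ds + \sum_{i=1}^\infty \ind{\tau_i<\tao} e^{-\alpha\tau_i} c(x^i_{\tau_i},\xi_i)\Big\}, \qquad B(x,V) := \ee_x^V\Big\{\int_0^{\tao} e^{-\alpha s}\,ds\Big\},
$$
so that $\lambda_\alpha = \inf_{x \in U}\inf_V A(x,V)/B(x,V)$ and $w_\alpha(x) = \inf_V \bigl(A(x,V) - \lambda_\alpha B(x,V)\bigr)$. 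Note that $0 \le B(x,V) \le 1/\alpha$, and for $x \in U \subseteq \mathcal{O}$ we have $B(x,V) > 0$.

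For the lower bound, fix $x \in U$. The definition of $\lambda_\alpha$ gives $A(x,V)/B(x,V) \ge \lambda_\alpha$ for every admissible $V$, hence $A(x,V) - \lambda_\alpha B(x,V) \ge 0$. Taking the infimum over $V$ yields $w_\alpha(x) \ge 0$, and therefore $\inf_{x \in U} w_\alpha(x) \ge 0$.

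For the matching upper bound, invoke the definition of $\lambda_\alpha$ as an infimum over $U \times \{V\}$: choose sequences $x_n \in U$ and strategies $V^n$ with $A(x_n,V^n)/B(x_n,V^n) \to \lambda_\alpha$. Then
$$
w_\alpha(x_n) \;\le\; A(x_n,V^n) - \lambda_\alpha B(x_n,V^n) \;=\; B(x_n,V^n)\Bigl(\tfrac{A(x_n,V^n)}{B(x_n,V^n)} - \lambda_\alpha\Bigr) \;\le\; \tfrac{1}{\alpha}\Bigl(\tfrac{A(x_n,V^n)}{B(x_n,V^n)} - \lambda_\alpha\Bigr) \longrightarrow 0,
$$
using that $B(x_n,V^n) \le 1/\alpha$ uniformly. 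Consequently $\inf_{x \in U} w_\alpha(x) \le \limsup_n w_\alpha(x_n) \le 0$, which combined with the previous step yields the claimed equality.

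The only subtle point is the harmless convention that $B(x,V) > 0$ for $x \in U$, which is needed to interpret the ratio defining $\lambda_\alpha$; this is guaranteed by the implicit assumption $U \subseteq \mathcal{O}$ (otherwise the stopped problem is vacuous at $x$). No compactness, continuity, or dynamic-programming argument is required — the lemma is purely the standard ``gauge shift'' identity that relates the optimal ratio $\lambda_\alpha$ to the best achievable value of the centred functional $w_\alpha$.
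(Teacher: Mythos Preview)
Your proof is correct and follows essentially the same route as the paper: the lower bound comes directly from the definition of $\lambda_\alpha$ as an infimum of the ratio $A/B$, and the upper bound from choosing near-optimal pairs $(x,V)$ together with the uniform bound $B(x,V)\le 1/\alpha$. The only cosmetic differences are that the paper uses a single $\varepsilon$-optimal pair $(x_\varepsilon,V^\varepsilon)$ instead of a sequence, and does not introduce the shorthand $A,B$.
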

\begin{proof}
For  $x\in U$ and any impulse strategy $V$ from \eqref{eqn.1} we have that
\begin{equation}
0\leq \ee_x^{V}\Big\{\int_0^{\tao}e^{-\alpha s} (f(Y_s)-\lambda_\alpha) ds + \sum_{i = 1}^\infty \ind{\tau_i < \tau_{\cal{O}}} e^{-\alpha \tau_i} c(x_{\tau_i}^i, \xi_i)\Big\}.
\end{equation}
Consider now $x_{\ep}\in U$ and $V^{\ep}$ such that
\begin{equation}
\lambda_\alpha\geq -\ep+ {\ee_{x_{\ep}}^{V^{\ep}}\Big\{\int_0^{\tao} e^{-\alpha s}f(Y_s)ds + \sum_{i = 1}^\infty \ind{\tau_i < \tau_{\cal{O}}} e^{-\alpha \tau_i}c(x_{\tau_i}^i, \xi_i)\Big\} \over \ee_{x_{\ep}}^{V^{\ep}}\Big\{\int_0^{\tao}e^{-\alpha s} ds\Big\}}.
\end{equation}
Then
\begin{equation}
0\leq \ee_{x_{\ep}}^{V^{\ep}}\Big\{\int_0^{\tao} e^{-\alpha_s} (f(Y_s)-\lambda_\alpha)ds + \sum_{i = 1}^\infty \ind{\tau_i < \tau_{\cal{O}}} e^{-\alpha \tau_i} c(x_{\tau_i}^i, \xi_i)\Big\} \leq \ep  \ee_{x_{\ep}}^{V^{\ep}}\left\{\int_0^{\tao}e^{-\alpha s} ds\right\}
\end{equation}
and since $\ee_{x_{\ep}}^{V^{\ep}}\left\{\int_0^{\tao}e^{-\alpha s} ds\right\}\leq {1 \over \alpha}$
we complete the proof.
\end{proof}

We characterize now the discounted value function $w_\alpha$.
\begin{proposition}
Function $w_\alpha$ defined in \eqref{eqn.2} is a solution to the following Bellman equation
\begin{equation}\label{eq3}
w_\alpha(x)=\inf_\tau \ee_x\left\{\int_0^{\tau\wedge \tao} e^{-\alpha s} (f(X_s)-\lambda_\alpha)ds +  \ind{\tau< \tau_{\cal{O}}}e^{-\alpha \tau} Mw_\alpha(X_\tau)\right\},
\end{equation}
where $Mv(x)=\inf_{\xi\in U}\left[c(x,\xi)+v(\xi)\right]$ for Borel measurable function $v$. Furthermore $w_\alpha \in C(E)$,
\begin{equation}\label{eq4}
|w_\alpha(x)|\leq \|f-\lambda_\alpha\| \ee_x\left\{\tao\right\}
\end{equation}
 and $|\lambda_\alpha|\leq \|f\|$.
\end{proposition}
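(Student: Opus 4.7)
The plan is to dispatch the two bounds first, derive the Bellman equation by a standard dynamic-programming argument, and finally obtain continuity through iteration on the number of impulses.

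I begin with $|\lambda_\alpha|\le\|f\|$. Testing the ``never impulse'' strategy $V_\emptyset$ in the infimum \eqref{eqn.1} gives
\begin{equation*}
\lambda_\alpha\le\frac{\ee_x\{\int_0^{\tao}e^{-\alpha s}f(X_s)\,ds\}}{\ee_x\{\int_0^{\tao}e^{-\alpha s}\,ds\}}\le\|f\|,
\end{equation*}
while $f\ge-\|f\|$ combined with nonnegativity of the impulse cost terms forces $\lambda_\alpha\ge-\|f\|$. The estimate \eqref{eq4} follows similarly: plugging $V_\emptyset$ into \eqref{eqn.2} and using $\int_0^{\tao}e^{-\alpha s}\,ds\le\tao$ gives $w_\alpha(x)\le\|f-\lambda_\alpha\|\ee_x\{\tao\}$, and the matching lower bound comes from $f(Y_s)-\lambda_\alpha\ge-\|f-\lambda_\alpha\|$ together with positivity of impulse costs.

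For the Bellman equation \eqref{eq3} I would apply the strong Markov property at the first impulse time $\tau_1$. Any admissible $V=(\tau_i,\xi_i)$ decomposes as $(\tau_1,\xi_1)$ followed by a shifted strategy $V'$ from $\xi_1$; conditioning on $F_{\tau_1}$ yields
\begin{equation*}
\ee_x^V\{\text{cost}\}=\ee_x\Bigl\{\int_0^{\tau_1\wedge\tao}e^{-\alpha s}(f(X_s)-\lambda_\alpha)\,ds+\ind{\tau_1<\tao}e^{-\alpha\tau_1}\bigl[c(X_{\tau_1},\xi_1)+\ee_{\xi_1}^{V'}\{\ldots\}\bigr]\Bigr\}.
\end{equation*}
Taking infimum over $V'$, then over $\xi_1\in U$, and finally over $\tau_1$ produces $w_\alpha(x)\ge$ the right-hand side of \eqref{eq3}, with $Mw_\alpha(X_{\tau_1})$ emerging from the $\xi_1$-minimization. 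The reverse inequality follows by constructing, for any stopping time $\tau$ and any $\varepsilon>0$, a strategy using a measurable $\varepsilon$-selector $\xi^\varepsilon(\cdot)$ for the infimum defining $Mw_\alpha$ together with an $\varepsilon$-optimal tail strategy from each $\xi\in U$; the resulting cost exceeds the right-hand side of \eqref{eq3} by at most $O(\varepsilon/\alpha)$.

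Continuity of $w_\alpha$ is the main obstacle. I would proceed by iteration on the number of impulses: let $v^n(x)$ denote the infimum in \eqref{eqn.2} restricted to strategies using at most $n$ impulses. Then $v^0(x)=\ee_x\{\int_0^{\tao}e^{-\alpha s}(f(X_s)-\lambda_\alpha)\,ds\}\in C(E)$ by the Feller assumption on $T_t^{\cal O}$ and dominated convergence, and the dynamic-programming recursion $v^{n+1}(x)=\inf_\tau\ee_x\{\int_0^{\tau\wedge\tao}e^{-\alpha s}(f(X_s)-\lambda_\alpha)\,ds+\ind{\tau<\tao}e^{-\alpha\tau}Mv^n(X_\tau)\}$ propagates continuity: $v\mapsto Mv$ preserves $C(E)$ by continuity of $c$ and compactness of $U$, and the optimal-stopping functional preserves $C(E)$ under the Feller hypothesis by the results of \cite{PalSte2011}. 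The sequence $v^n\searrow w_\alpha$ pointwise; the delicate step is uniform convergence, which is forced by $c(\cdot,\cdot)\ge c>0$ together with the discount factor $e^{-\alpha\tau_i}$: any $\varepsilon$-optimal strategy uses only finitely many discounted impulses (uniformly in the starting point, thanks to \eqref{eq4}), so truncating at $n$ impulses introduces an error that vanishes uniformly in $x$. Uniform convergence of continuous functions then yields $w_\alpha\in C(E)$.
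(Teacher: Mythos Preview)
Your approach is essentially correct and overlaps with the paper's: both derive \eqref{eq3} via the iterates $w_\alpha^n$ (the $n$-impulse value functions) satisfying the one-step recursion and decreasing to $w_\alpha$, and both invoke the stopping result of \cite{PalSte2011} (Theorem~\ref{optd} here) to propagate continuity through the operator $F$. The bounds $|\lambda_\alpha|\le\|f\|$ and \eqref{eq4} are handled the same way.

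The one genuine difference is in how continuity of $w_\alpha$ itself is obtained. You push for \emph{uniform} convergence $w_\alpha^n\searrow w_\alpha$, arguing that the discount together with $c(\cdot,\cdot)\ge c>0$ forces $\ee_x^V[e^{-\alpha\tau_n}]\to 0$ uniformly in $x$ for $\varepsilon$-optimal strategies. This can be made rigorous (bound $c\sum_i \ee_x^V[e^{-\alpha\tau_i}\ind{\tau_i<\tao}]$ using \eqref{eq4} and $\sup_x\ee_x\{\tao\}<\infty$, then control the truncation error by $\ee_x^V[e^{-\alpha\tau_n}\ind{\tau_n<\tao}]$ times a uniform constant), but it is real work that your sketch elides. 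The paper avoids this entirely: it first passes to the limit in the recursion \emph{pointwise} to obtain \eqref{eq3}, then observes that $Mw_\alpha$ is automatically continuous regardless of any regularity of $w_\alpha$, since
\[
|Mw_\alpha(x)-Mw_\alpha(y)|\le \sup_{\xi\in U}|c(x,\xi)-c(y,\xi)|,
\]
and that $Mw_\alpha\ge 0$ by Lemma~\ref{lem1}. With $G=Mw_\alpha\in C(E)$, $G\ge 0$, a single application of Theorem~\ref{optd} to the right-hand side of \eqref{eq3} yields $w_\alpha\in C(E)$ directly. This shortcut---continuity of $Mv$ depends only on $c$, not on $v$---is worth internalizing; it recurs in the proofs of Theorems~\ref{thm1} and~\ref{thm2}.
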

\begin{proof}
Let for $v\in C(E)$
\begin{equation}
Fv(x):=\inf_\tau \ee_x\left\{\int_0^{\tau\wedge \tao} e^{-\alpha s} (f(X_s)-\lambda_\alpha)ds +  \ind{\tau< \tau_{\cal{O}}}e^{-\alpha \tau} Mv(X_\tau)\right\}.
\end{equation}
When $v(x)\geq 0$ for $x\in U$ then also $Mv(x)\geq 0$ for $x\in E$ and by Theorem \ref{optd} we have that $Fv\in C(E)$. Consider a sequence
$w_\alpha^0(x)=\ee_x\left\{\int_0^{\tao} e^{-\alpha s} (f(X_s)-\lambda_\alpha)ds\right\}$, $w_\alpha^1(x)=Fw_\alpha^0(x)$, $w_\alpha^{n+1}(x)=Fw_\alpha^n(x)$ for $n=1,\ldots$.
One can show that
\begin{equation}
w_\alpha^n(x)=\inf_{V^n}\ee_x^{V^n}\Big\{\int_0^{\tao} e^{-\alpha s} (f(Y_s)-\lambda_\alpha) ds + \sum_{i = 1}^n \ind{\tau_i < \tau_{\cal{O}}} e^{-\alpha \tau_i} c(x_{\tau_i}^i, \xi_i)\Big\},
\end{equation}
where $V^n$ is an impulse strategy consisting of at most $n$ impulses. Clearly $w_\alpha^n(x)$ is a decreasing sequence converging to $w_\alpha(x)$.

%Since $\inf_{x\in U}w_\alpha(x)=0$ we also have that  $\inf_{x\in U}w_\alpha^n(x)\geq 0$ and therefore by Theorem \ref{optd} we have that %$w_\alpha^n\in C(E)$.
Since
\begin{equation}
w_\alpha^{n+1}(x)=\inf_\tau \ee_x\left\{\int_0^{\tau\wedge \tao} e^{-\alpha s} (f(X_s)-\lambda_\alpha)ds +  \ind{\tau< \tau_{\cal{O}}}e^{-\alpha \tau} Mw_\alpha^n(X_\tau)\right\}
\end{equation}
letting $n\to \infty$ we obtain \eqref{eq3}. Notice that $\inf_{x\in U}w_\alpha(x)=0$ and therefore $Mw_\alpha(x)\geq 0$ for $x\in E$ and $Mw_\alpha\in C(E)$ and finally using Theorem \ref{optd} we have that $w_\alpha\in C(E)$.
Furthermore since $c(x,\xi)\geq 0$ we have
\begin{eqnarray}
&&w_\alpha(x)\leq \inf_{V} \ee_x^{V}\Big\{\int_0^{\tao} e^{-\alpha s} \|f-\lambda_\alpha\| ds + \sum_{i = 1}^\infty \ind{\tau_i < \tau_{\cal{O}}} e^{-\alpha \tau_i} c(x_{\tau_i}^i, \xi_i)\Big\}= \nonumber \\
&&\ee_x\Big\{\int_0^{\tao} e^{-\alpha s} \|f-\lambda_\alpha\| ds\Big\}\leq  \|f-\lambda_\alpha\| \ee_x\left\{\tao\right\}
\end{eqnarray}
and similarly
\begin{eqnarray}
&&w_\alpha(x)\geq \inf_{V} \ee_x^{V}\Big\{-\int_0^{\tao} e^{-\alpha s} \|f-\lambda_\alpha\| ds + \sum_{i = 1}^\infty \ind{\tau_i < \tau_{\cal{O}}} e^{-\alpha \tau_i} c(x_{\tau_i}^i, \xi_i)\Big\}= \nonumber \\
&&\ee_x\Big\{-\int_0^{\tao} e^{-\alpha s} \|f-\lambda_\alpha\| ds\Big\}\geq  -\|f-\lambda_\alpha\| \ee_x\left\{\tao\right\},
\end{eqnarray}
where in both approximations the infimum was obtained for a strategy without impulses,
so that we have \eqref{eq4}. The fact that $|\lambda_\alpha|\leq \|f\|$ follows directly from \eqref{eqn.1}.
\end{proof}

We consider now undiscounted impulse control problem.

\begin{theorem}\label{thm1}
We have that
\begin{equation}\label{eq5}
\lim_{\alpha\to 0}\lambda_\alpha=\lambda:=\inf_V \inf_{x\in U}{J}^{\cal{O}}\big(x, (\tau_i, \xi_i)\big)
\end{equation}
and there is a function $w\in C(E)$ which is a solution to the equation
\begin{equation}\label{eq6}
w(x)=\inf_\tau \ee_x\left\{\int_0^{\tau\wedge \tao} (f(X_s)-\lambda)ds +  \ind{\tau< \tau_{\cal{O}}}Mw(X_\tau)\right\}
\end{equation}
such that $\inf_{x\in U}w(x)=0$.
\end{theorem}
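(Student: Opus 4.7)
The plan is to use the vanishing discount method: extract subsequential limits of $(\lambda_\alpha, w_\alpha)$ as $\alpha \downarrow 0$, pass to the limit in the discounted Bellman equation (\ref{eq3}), and then identify the resulting constant with $\lambda$. I would start by collecting the a priori bounds from the previous proposition: $|\lambda_\alpha| \leq \|f\|$ and $|w_\alpha(x)| \leq \|f-\lambda_\alpha\|\,\ee_x\{\tao\}$. Because $w_\alpha \equiv 0$ on $E\setminus\cal{O}$ and $\sup_{x\in\cal{O}}\ee_x\{\tao\} < \infty$, this yields a uniform sup-norm bound on $\{w_\alpha\}$. Extracting a first subsequence $\alpha_n \downarrow 0$, assume $\lambda_{\alpha_n} \to \bar\lambda$ for some $\bar\lambda \in [-\|f\|,\|f\|]$.

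The main obstacle is extracting a convergent subsequence from $\{w_{\alpha_n}\}$ in a sense strong enough to pass to the limit in (\ref{eq3}). The natural route is to combine the Feller property of the stopped semigroup $T_t^{\cal{O}}$ with Theorem~\ref{optd} (the continuity/stability result for the optimal stopping operator from the appendix), applied iteratively to the fixed-point operator $F$ from the proof of the previous proposition, to show that $\{w_\alpha\}$ is equicontinuous on compact subsets of $E$; Arzel\`a--Ascoli then yields a further subsequence with $w_{\alpha_n}\to w$ uniformly on compacts and $w\in C(E)$. If direct equicontinuity proves delicate, the fallback is to work with $\bar w := \limsup_{\alpha\downarrow 0} w_\alpha$ and $\underline w := \liminf_{\alpha\downarrow 0} w_\alpha$, show each satisfies a version of (\ref{eq6}), and close via a comparison argument that exploits the normalization $\inf_U w = 0$.

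With convergence in hand the passage to the limit is routine. Continuity of $c$ and compactness of $U$ give $Mw_{\alpha_n} \to Mw$ uniformly on compact sets; since $e^{-\alpha_n s} \to 1$ on $[0,\tao]$ which is $\prob_x$-a.s.\ finite, dominated convergence together with stability of the optimal stopping value under uniform convergence of the data (Theorem~\ref{optd}) gives
\begin{equation*}
w(x) = \inf_\tau \ee_x\Big\{\int_0^{\tau\wedge\tao}(f(X_s)-\bar\lambda)\,ds + \ind{\tau<\tao}\,Mw(X_\tau)\Big\},
\end{equation*}
which is (\ref{eq6}) with $\lambda$ replaced by $\bar\lambda$. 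The normalization $\inf_{x\in U} w(x)=0$ descends from Lemma~\ref{lem1} via uniform convergence on the compact set $U$.

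To identify $\bar\lambda = \lambda$, I would run a standard verification argument based on (\ref{eq6}). Iterating the equation shows that for any admissible $V$ with $\ee_x^V\{\tau_n\}<\infty$ one has
\begin{equation*}
w(x) \leq \ee_x^V\Big\{\int_0^{\tau_n\wedge\tao}(f(Y_s)-\bar\lambda)\,ds + \sum_{i=1}^n \ind{\tau_i<\tao}\,c(x_{\tau_i}^i,\xi_i) + w(Y_{\tau_n\wedge\tao})\Big\},
\end{equation*}
with $\ep$-equality for the strategy built from the optimal stopping region of (\ref{eq6}) and a measurable selector $\bar\xi(x) \in \argmin_{\xi\in U}[c(x,\xi)+w(\xi)]$. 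Dividing by $\ee_x^V\{\tau_n\wedge\tao\}$, sending $n\to\infty$ using boundedness of $w$ to kill the boundary term, and taking the infimum over $x\in U$ and $V$ yields $\bar\lambda \leq J^{\cal{O}}(x,V)$ for every $V$ and equality for the constructed one, hence $\bar\lambda = \lambda$. Since this identification does not depend on the chosen subsequence, the full limit $\lim_{\alpha\to 0}\lambda_\alpha = \lambda$ follows.
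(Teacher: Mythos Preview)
Your overall plan---vanishing discount, subsequential limits, then verification---is the paper's, but two of the steps you sketch do not go through as written.

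\textbf{Compactness.} You propose to show that $\{w_\alpha\}$ itself is equicontinuous, invoking Theorem~\ref{optd} ``iteratively''. Theorem~\ref{optd} gives continuity of each $w_\alpha$ but no modulus uniform in $\alpha$, and your half-relaxed-limit fallback would leave you with merely semicontinuous objects in an optimal-stopping problem. The paper bypasses this entirely with a one-line observation you are missing:
\[
|Mw_\alpha(x)-Mw_\alpha(y)|\ \le\ \sup_{\xi\in U}|c(x,\xi)-c(y,\xi)|,
\]
so $\{Mw_\alpha\}$ is equicontinuous (and bounded by $\|c\|$, since $\inf_U w_\alpha=0$) \emph{independently} of any regularity of $w_\alpha$. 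Arzel\`a--Ascoli then gives $Mw_{\alpha_n}\to v\in C(E)$ uniformly on compacts; one defines $w$ as the value of the undiscounted stopping problem with terminal reward $v$, shows $w_{\alpha_n}\to w$ uniformly on compacts by direct estimation, and concludes $v=Mw$ a posteriori.

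\textbf{Verification.} Your claim that ``boundedness of $w$ kills the boundary term'' after dividing by $\ee_x^V\{\tau_n\wedge\tao\}$ is only valid when the denominator tends to infinity. Since $\tau_n\wedge\tao\uparrow\tao$ and $\ee_x^V\{\tao\}$ may well be finite under the controlled measure, this can fail (and the Example right after the theorem shows both behaviours occur). The paper therefore splits into cases for an $\ep$-optimal strategy $V_\ep^n$: if $\liminf_n\ee_x^{V_\ep^n}\{\tau_n\wedge\tao\}=\infty$ your argument works; if it stays bounded, the paper uses $c\ge c>0$ to bound the expected number of impulses before $\tao$ (inequality \eqref{eq64}), which forces $\prob_x\{\tau_{n_k}<\tao\}\to 0$ along a subsequence and hence $\ee_x\{\ind{\tau_{n_k}<\tao}w(\xi_{n_k})\}\to 0$. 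In that second case one also has to start from $\hat x\in U$ with $w(\hat x)=0$ to eliminate the initial term. Without this dichotomy the identification $\bar\lambda=\lambda$ is incomplete.
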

\begin{proof}
Since $|\lambda_\alpha|\leq \|f\|$ one can choose a subsequence $\alpha_n\to 0$ and $\lambda$ such that $\lambda_{\alpha_n}\to \lambda$, as $n\to \infty$.
By  \eqref{eq4} we know that $w_\alpha$ are uniformly (in $\alpha$) bounded by a constant $L$ (since $\sup_x \ee_x\left\{\tao\right\}<\infty$). Since $|Mw_\alpha(x)-Mw_\alpha(y)|\leq \sup_{\xi\in U}|c(x,\xi)-c(y,\xi)|$ the family $\left\{Mw_\alpha(x), \alpha>0\right\}$ is equicontinuous at each point and bounded by $\|c\|+L$. Therefore there is a  function $v\in C(E)$ and a further subsequence of $\alpha_n\to 0$ for simplicity again denoted by $\alpha_n$ such that  $Mw_{\alpha_n}$ converges uniformly on compact sets to $v$. Let
\begin{equation}
\hat{w}_\alpha(x)=\inf_\tau \ee_x\left\{\int_0^{\tau\wedge \tao} e^{-\alpha s} (f(X_s)-\lambda_\alpha)ds +  \ind{\tau< \tau_{\cal{O}}}e^{-\alpha \tau} v(X_\tau)\right\}.
\end{equation}
For $x$ from a compact set $K$ and given $\vep>0$ and $T>0$ such that $\sup_{x\in E}{\ee_x\left\{\tao\right\}\over T}\leq \vep$ one can find by Proposition 2.1 of  \cite{PalSte2010} $R>0$ such that
\begin{equation}\label{eq6'}
\sup_{x\in K}\prob_x\left\{\exists_{t\leq T} \ \rho(x,X(t))\geq R\right\}\leq \vep ,
\end{equation}
where $\rho$ is the metric on $E$. Since $B(K,R)=\left\{x: \rho(x,K)\leq R\right\}$ is a compact set, for a sufficiently large $n$, say $n\geq n_0$ we have
$\sup_{y\in B(K,R)} |Mw_{\alpha_m}(y)-v(y)|\leq \vep$. Therefore for $x\in K$
\begin{eqnarray}
&&|w_{\alpha_n}(x)-\hat{w}_{\alpha_n}(x)|\leq \sup_\tau \ee_x\left\{\ind{\tau< \tau_{\cal{O}}}|Mw_{\alpha_n}(X_\tau)-v(X_\tau)|\right\}\leq \nonumber \\
&&  \sup_\tau \ee_x\left\{\ind{\tau< \tau_{\cal{O}}}\ind{\tau_{\cal{O}}\leq T} \ind{X_\tau\in B(K,R)} \vep + (\ind{\tau_{\cal{O}}\geq T}+ \right. \nonumber \\
&& \ind{\tau< \tau_{\cal{O}}}\ind{\tau_{\cal{O}}\leq T}\ind{X_\tau\notin B(K,R)}) \left.(\|c\|+L)\right\} \nonumber \\
&&\leq \vep(1+2(\|c\|+L)).
\end{eqnarray}
Moreover since $|\lambda_\alpha|\leq \|f\|$ and $v$ is bounded we have
\begin{eqnarray}
&&|\inf_\tau \ee_x\left\{\int_0^{\tau\wedge \tao} e^{-\alpha s} (f(X_s)-\lambda_\alpha)ds +  \ind{\tau< \tau_{\cal{O}}}e^{-\alpha \tau} v(X_\tau)\right\} - \nonumber \\
&& \inf_\tau \ee_x\left\{\int_0^{\tau\wedge \tao} (f(X_s)-\lambda_\alpha)ds +  \ind{\tau< \tau_{\cal{O}}} v(X_\tau)\right\}|\leq  \nonumber \\
&& \sup_\tau \ee_x\left\{\int_0^{\tau\wedge \tao} |e^{-\alpha s}-1| |f(X_s)-\lambda_\alpha|ds + \right. \nonumber \\
&& \left. \ind{\tau< \tau_{\cal{O}}}|e^{-\alpha \tau}-1| |v(X_\tau)| \right\}\to 0
\end{eqnarray}
uniformly as $\alpha\to 0$. Since
\begin{eqnarray}
&&|\inf_\tau \ee_x\left\{\int_0^{\tau\wedge \tao} (f(X_s)-\lambda_\alpha)ds +  \ind{\tau< \tau_{\cal{O}}} v(X_\tau)\right\}- \nonumber \\
&& \inf_\tau \ee_x\left\{\int_0^{\tau\wedge \tao} (f(X_s)-\lambda)ds +  \ind{\tau< \tau_{\cal{O}}} v(X_\tau)\right\}| \nonumber \\
&&\leq  |\lambda_\alpha-\lambda|\ee_x\left\{\tao\right\}
\end{eqnarray}
we finally have that $w_{\alpha_n}(x)$ converges uniformly on compact subsets to $w(x)$ of the form
\begin{equation}
w(x)= \inf_\tau \ee_x\left\{\int_0^{\tau\wedge \tao} (f(X_s)-\lambda)ds +  \ind{\tau< \tau_{\cal{O}}} v(X_\tau)\right\}.
\end{equation}
Then also $Mw_{\alpha_n}$ converges uniformly to $Mw$, which means that $v(x)=Mw(x)$ and therefore $w$ is a solution to \eqref{eq6} and $w\in C(E)$. Since $\inf_{x\in U} w_\alpha(x)=0$ we also have that $\inf_{x\in U} w(x)=0$.

Iterating \eqref{eq6} we obtain inductively for each positive integer $n$
\begin{eqnarray}\label{eq61}
&&{w}(x)=\inf_{V^n} \ee_x^{V^n}\left\{\int_0^{\tao\wedge \tau_n}(f(Y_s)-\lambda) ds + \right. \nonumber \\
&& \left. \sum_{i=1}^n\ind{\tau_i < \tao}c(x_{\tau_i}^i,\xi_i)+\ind{\tau_{n} < \tao}{w}(\tau_n,\xi_n)\right\},
\end{eqnarray}
where $V^n$ denotes impulse strategy consisting of at most $n$ impulses. We restrict here practically to impulses which at time $\tau_{i+1}$  depend only on the behavior of the controlled process after time $\tau_i$. Using the procedure as in section 7 of \cite{BasSte2018} we can show that extension to general impulse strategies (depending on the whole history) does not change the value of $w$.
Then similarly as in the proof of Lemma \ref{lem1} for each $n$ we have
\begin{equation}\label{eq62}
\lambda = \inf_{V^n} { -w(x)+\ee_x^{V^n}\left\{\int_0^{\tao\wedge \tau_n}f(Y_s) ds +  \sum_{i=1}^n\ind{\tau_i < \tao}c(x_{\tau_i}^i,\xi_i)+\ind{\tau_{n} < \tao}{w}(\xi_n)\right\}\over \ee_x^{V^n}\left\{\tao\wedge \tau_n\right\}}.
\end{equation}
Let for each $\vep>0$ the strategy $V_\vep^n$ be an $\vep$ optimal in \eqref{eq62} i.e.
\begin{equation}
\lambda+\vep \geq { -w(x)+\ee_x^{V^n}\left\{\int_0^{\tao\wedge \tau_n}f(Y_s) ds +  \sum_{i=1}^n\ind{\tau_i < \tao}c(x_{\tau_i}^i,\xi_i)+\ind{\tau_{n} < \tao}{w}(\xi_n)\right\}\over \ee_x^{V^n}\left\{\tao\wedge \tau_n\right\}}.
\end{equation}
Then either
\begin{itemize}
\item{(i)} $\liminf_{n\to \infty} \ee_x^{V_\vep^n}\left\{\tao\wedge \tau_n\right\}=\infty$ or
\item{(ii)} $\liminf_{n\to \infty} \ee_x^{V_\vep^n}\left\{\tao\wedge \tau_n\right\}<\infty$.
\end{itemize}
In the case $(i)$ taking into account that $w$ is bounded we have
\begin{equation}\label{eq63}
\lambda+\vep \geq \liminf_{n\to \infty} {\ee_x^{V_\vep^n}\left\{\int_0^{\tao\wedge \tau_n}f(Y_s) ds +  \sum_{i=1}^n\ind{\tau_i < \tao}c(x_{\tau_i}^i,\xi_i)\right\}\over \ee_x^{V_\vep^n}\left\{\tao\wedge \tau_n\right\}}\geq \lambda.
\end{equation}

Let $N(0,\tau)$ be the number of impulses in the time interval $[0,\tau)$. Then since $w$ is bounded and $c(x,\xi)\geq c>0$ we can restrict ourselves to stopping times $\tau_i$ and strategy $V_\vep^n$  such that for each $n$
\begin{equation}\label{eq64}
\lambda +\vep \geq -\|f\|+ {-w(x)+c\ee_x^{V_\vep^n}\left\{N(0,\tao\wedge \tau_n)\right\} \over \ee_x^{V_\vep^n}\left\{\tao\wedge \tau_n\right\}}
\end{equation}
Then in the case $(ii)$ we have that $\ee_x^{V_\vep^n}\left\{N(0,\tao\wedge \tau_n)\right\}$ for a suitably chosen subsequence $n_k$ is bounded and therefore
\begin{equation}
 \ee_x^{V^{n_k}}\left\{\ind{\tau_{n_k} < \tao}{w}(\xi_{n_k})\right\}\to 0
 \end{equation}
as $k\to \infty$.
Consequently for $\hat{x}$ such that $w(\hat{x})=0$ we have
\begin{equation}\label{eq65}
\lambda+\vep \geq \liminf_{n\to \infty} {\ee_{\hat{x}}^{V_\vep^n}\left\{\int_0^{\tao\wedge \tau_n}f(Y_s) ds +  \sum_{i=1}^n\ind{\tau_i < \tao}c(x_{\tau_i}^i,\xi_i)\right\}\over \ee_{\hat{x}}^{V_\vep^n}\left\{\tao\wedge \tau_n\right\}}\geq \lambda.
\end{equation}
Summarizing  \eqref{eq63} and \eqref{eq65} we obtain \eqref{eq5}. Since $\lambda$ is defined in a unique way in \eqref{eq5}, any subsequence of $\lambda_\alpha$ converges to the same value $\lambda$ equal to \eqref{eq5} therefore we have convergence of $\lambda_\alpha$ to $\lambda$ as $\alpha \to 0$.

\end{proof}

\begin{remark} Notice that it may happen that for an optimal impulse strategy we have $\ee_x^V\left\{\tao\right\}=\infty$ as we can see in the example below.
\end{remark}
\begin{example}
Let $E=[0,\infty)$, we have deterministic movement to the right $X_t=x+t$, $f(x)=(f+x^2)\wedge M$ with $M > \sqrt{c}$, $c(x,\xi)=c>0$ and $U=[0,1]$ and ${\cal{O}}=[0,M)$. An optimal strategy is to make shift to $0$ as soon as we are at $\sqrt{c}$ or above it. Starting from $0$ we reach $\sqrt{c}$ at time $\sqrt{c}$ and $\lambda=f+{c\over 3} + \sqrt{c} + {f\over \sqrt{c}}$. Controlled process under this optimal strategy never exits $\cal{O}$.
\end{example}

\section{Bellman equation}\label{S:main}
Denote by $\om$ an increasing sequence of open balls in $E$ with radius $m$.
In this section we shall need the following set of assumptions:
\begin{enumerate}
\item[(\namedlabel{A1}{A.1})]
 Markov process $(X_t)$ has a unique invariant probability measure $\mu$ and there is a continuous solution $q$ to the additive Poisson equation (APE) associated with $f$ such that for any bounded stopping time $\tau$ we have
\begin{equation}
q(x)=\ee_x\left\{\int_0^\tau (f(X_s)-\mu(f))ds+ q(X_\tau)\right\},
\end{equation}
and additionally $q$ is bounded from above by a constant $K$ and $(q(X_t))$ is uniformly integrable.
\item[(\namedlabel{A2}{A.2})] for each $m$ the stopped semigroup $T_t^{\om}$ transforms $C(E)$ into itself.
\item[(\namedlabel{A3}{A.3})] for each $m$ for the first exit time $\taom$ we have $\sup_{x\in E}\ee_x\left\{(\taom)^2\right\}<\infty$,
\item[(\namedlabel{A4}{A.4})] for each $T>0$ we have that $\prob_x\left\{\taom\leq T\right\}$ converges to $0$ uniformly in $x$ from compact sets, as $m\to \infty$.
\end{enumerate}
\begin{remark}
When there is a bounded on compact subset function $K$ and $\gamma>0$ such that for $f\in C(E)$ we have $|\ee_x\left\{f(X_t)\right\}-\mu(f)|\leq \|f\| K(x) e^{-\gamma t}$ then by Lemma 2.2 of \cite{PalSte2017} there is a continuous solution to (APE) and uniform integrability of $q(X_t)$ corresponds to uniform integrability of $K(X_t)$. Then
\[z_t:= \int_0^t (f(X_s)- \mu(f))ds + q(X_t)\]
 is a uniformly integrable martingale. Let  $\Gamma=\left\{x: f(x)\geq \mu(f)\right\}$ be a compact set and suppose that for each $x\in E$ we have $\ee_x\left\{T_\Gamma\right\}<\infty$, where $T_\Gamma=\inf\left\{s\geq 0: X_s\in \Gamma \right\}$. Then for $x\notin \Gamma$
\begin{equation}
q(x)= \ee_x\left\{\int_0^{T_\Gamma} (f(X_s)- \mu(f))ds + q(X_{T_\Gamma})\right\}\leq \ee_x\left\{q(X_{T_\Gamma})\right\}
\end{equation}
and by continuity $q$ is bounded from above.
More general sufficient conditions for existence of solutions to (APE) are formulated in Theorem 2.3 of \cite{Dev2020}. Sufficient conditions for \eqref{A1} were studied in Lemma 2.2 and Lemma 3.18 in \cite{PalSte2017}.
\end{remark}
Let following Theorem \ref{thm1}
\begin{equation}\label{eq9}
\lambda_{(m)}=\inf_{x\in U}\inf_V \liminf_{T\to \infty} {\ee_x^V \Big\{ \int_0^{\taom \wedge T} f(Y_s) ds + \sum_{i = 1}^\infty \ind{\tau_i <\tau_{\taom \wedge T}} c(x_{\tau_i}^i, \xi_i) \Big\}\over \ee_x^V \Big\{\taom \wedge T\Big\}},
\end{equation}
\begin{equation}\label{eq9p}
w^m(x)=\inf_\tau \ee_x\left\{\int_0^{\taom \wedge \tau}(f(X_s)-\lambda_{(m)}) ds + \ind{\tau <\taom}Mw^m(x_\tau)\right\}.
\end{equation}
and $\inf_{\xi \in U}w^m(\xi)= 0$.
\begin{remark}
There is a usual problem how to understand
\[\inf_\tau \ee_x\left\{\int_0^\tau g(X_s)ds + h(X_\tau)\right\}\]
for $g, h \in C(E)$.
We can consider it as
\[\liminf_{T\to \infty} \ee_x\left\{\int_0^{\tau\wedge T} g(X_s)ds + h(X_{\tau\wedge T})\right\},\]
 which coincides with $\ee_x\left\{\int_0^\tau g(X_s)ds + h(X_\tau)\right\}$ with random non necessarily bounded $\tau$, when we  assume that $\ee_x\left\{\tau\right\}<\infty$ and take into account quasi leftcontinuity of $(X_t)$ (see Theorem 3.13 of \cite{Dyn1965}). Consequently if we know that for optimal stopping times we can restrict ourselves to stopping times $\tau$ such that $\ee_x\left\{\tau\right\}\leq M$ then   $\inf_\tau \ee_x\left\{\int_0^\tau g(X_s)ds + h(X_\tau)\right\}$
is the same as the infimum over all bounded stopping times.
\end{remark}
We have the following main result
\begin{theorem}\label{thm2} Under \eqref{A1}-\eqref{A4}  for every convergent subsequence of $\lambda_{(m_k)}$ such that $\lim \lambda_{(m_k)}=\lambda<\mu(f)$ we have that
\begin{equation}\label{form1}
\lambda=\inf_V {J}\big(x, V\big)
\end{equation}
and there is a continuous  bounded function $w$ such that $\inf_{\xi\in U} w(\xi)=0$ and
\begin{equation}\label{form2}
w(x)=\inf_\tau \ee_x\left\{\int_0^\tau (f(X_s)-\lambda)ds + Mw(X_\tau)\right\}.
\end{equation}
Furthermore \eqref{form2} defines an optimal strategy $\hat{V}=(\hat{\tau}_i, \hat{\xi}_i)$ for $J$:

$\hat{\tau}_1=\inf\left\{s\geq 0: w(Y_s)=Mw(Y_s)\right\}$,
$\hat{\tau}_{n+1}=\hat{\tau}_n + \hat{\tau}_1 \circ \theta_{\hat{\tau}_n}$, where $\theta_t$ is a shift operator corresponding to the controlled process $(Y_t)$ and $\hat{\xi}_n=\argmin_{z\in U}(c(x_{\hat{\tau}_n}^n,z)+w(z)$.
\end{theorem}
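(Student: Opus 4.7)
The plan is to pass to the limit $m_k\to\infty$ in the ingredients $(w^{m_k},\lambda_{(m_k)},Mw^{m_k})$ provided by Theorem \ref{thm1} applied to the open sets $\om$, extracting subsequences by Arzel\`a--Ascoli and recovering \eqref{form2} as the limiting Bellman equation. Structurally the argument mirrors the small-$\alpha$ limit inside the proof of Theorem \ref{thm1}, with the role played there by the trivial $1/\alpha$ bound replaced by the APE estimate from \eqref{A1}.

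The main obstacle is obtaining uniform-in-$m$ bounds on $w^{m_k}$. The upper bound $w^{m_k}(x)\leq Mw^{m_k}(x)\leq\|c\|$ is immediate from the stopped Bellman equation \eqref{eq9p} (take $\tau=0$) together with $\inf_{\xi\in U}w^{m_k}(\xi)=0$ and the uniform bound on $c$. For the lower bound I would apply the APE on each interval $[\tau_i,\tau_{i+1}\wedge\taomk)$ of an arbitrary strategy $V$ and sum to obtain
\begin{align*}
&\ee_x^V\Big\{\int_0^{\taomk}(f(Y_s)-\lambda_{(m_k)})\,ds+\sum_{i\geq 1}\ind{\tau_i<\taomk}c(x_{\tau_i}^i,\xi_i)\Big\}\\
&\qquad = q(x)-\ee_x^V\{q(Y_{\taomk})\}+(\mu(f)-\lambda_{(m_k)})\ee_x^V\{\taomk\}\\
&\qquad\quad +\sum_i\ee_x^V\big\{\ind{\tau_i<\taomk}[c(x_{\tau_i}^i,\xi_i)+q(x_{\tau_i}^i)-q(\xi_i)]\big\};
\end{align*}
the bound $q\leq K$, the eventual strict gap $\lambda_{(m_k)}<\mu(f)$, $c\geq c>0$, boundedness of $q$ on the compact $U$, and uniform integrability of $q$ along controlled trajectories then bound the right-hand side below by a constant independent of $m_k$ and $V$. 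Once $|w^{m_k}|\leq C$ uniformly, the family $(Mw^{m_k})$ is automatically equicontinuous via $|Mw^{m_k}(x)-Mw^{m_k}(y)|\leq\sup_{\xi\in U}|c(x,\xi)-c(y,\xi)|$, so Arzel\`a--Ascoli produces $Mw^{m_k}\to v$ uniformly on compacta. Passing to the limit in \eqref{eq9p} then uses \eqref{A4}: since $\sup_{x\in K}\prob_x\{\taomk\leq T\}\to 0$ on compacta, for any stopping time $\tau$ with $\ee_x\{\tau\}$ finite one has $\tau\wedge\taomk\to\tau$ and $\ind{\tau<\taomk}\to 1$, yielding \eqref{form2} for a further compact-convergent limit $w$ with $v=Mw$ and $\inf_{\xi\in U}w(\xi)=0$.

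To conclude \eqref{form1} and optimality of $\hat V$ I would iterate \eqref{form2} to get, as in \eqref{eq61},
\begin{equation*}
w(x)=\inf_{V^n}\ee_x^{V^n}\Big\{\int_0^{\tau_n}(f(Y_s)-\lambda)\,ds+\sum_{i=1}^n c(x_{\tau_i}^i,\xi_i)+w(\xi_n)\Big\},
\end{equation*}
divide by $\ee_x^{V^n}\{\tau_n\}$, and rerun the dichotomy (i)/(ii) of the proof of Theorem \ref{thm1}: in case (i) the residual $\ee^{V^n}\{w(\xi_n)\}/\ee^{V^n}\{\tau_n\}\to 0$ by boundedness of $w$, while in case (ii) the bound $c\geq c>0$ forces $\ee^{V^n}\{N(0,\tau_n)\}$ to stay bounded along a subsequence, so that starting from $\hat x$ at which $w$ is near zero in $U$ closes the argument, producing $\inf_V J(x,V)=\lambda$. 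Optimality of $\hat V$ then follows from $\hat\tau_1$ being optimal in the stopping part of \eqref{form2} (by the optimal stopping results imported from the appendix applied to the non-negative reward $Mw-w$) and $\hat\xi_n$ being optimal in the inner minimization defining $Mw(x_{\hat\tau_n}^n)$, so the iterated equality collapses to $J(x,\hat V)=\lambda$.
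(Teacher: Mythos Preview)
Your overall strategy is the paper's, but the two places where you diverge are precisely where the real work lies, and in both you have genuine gaps.

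\textbf{The lower bound on $w^{m_k}$.} First, your APE identity has the wrong sign in the jump term: telescoping gives $q(\xi_i)-q(x_{\tau_i}^i)$, not $q(x_{\tau_i}^i)-q(\xi_i)$. With the correct sign each summand is bounded below by $c+\inf_U q-K$, but this may well be negative, and there is no a priori control on the number of impulses versus $\ee_x^V\{\taomk\}$; moreover the leading $q(x)$ is not bounded below. So your argument does not yield a uniform lower bound on $w^{m_k}$, and in fact none is available in general under \eqref{A1}--\eqref{A4}. The good news is that you do not need it: since $\inf_U w^{m_k}=0$, one has $c\le Mw^{m_k}\le\|c\|$ directly, and equicontinuity of $Mw^{m_k}$ comes from the structure of $M$ alone. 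This is all Arzel\`a--Ascoli requires.

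\textbf{Passing to the limit in \eqref{eq9p}.} Your one-line argument ``$\tau\wedge\taomk\to\tau$, $\ind{\tau<\taomk}\to 1$'' only gives the easy inequality $\limsup_k w^{m_k}(x)\le \inf_\tau\ee_x\{\int_0^\tau(f-\lambda)\,ds+v(X_\tau)\}$: you fix a good $\tau$ for the limit problem and feed it into the stopped one. The reverse inequality is the heart of the matter, and you have no mechanism for it; you also cannot extract a convergent subsequence of $w^{m_k}$ by Arzel\`a--Ascoli since the $w^{m_k}$ are not equicontinuous. The paper's route is to use the APE not to bound $w^{m_k}$ but to \emph{uniformly restrict the stopping times}: from \eqref{eq9p} and $q\le K$ one gets, for all large $k$, $\ee_x\{\taomk\wedge\tau\}\le (\|c\|-q(x)+K)/a$ in the infimum. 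With this uniformity in hand, the paper introduces the auxiliary problems $z_m(x)=\inf_\tau\ee_x\{\int_0^{\taom\wedge\tau}(f-\lambda)\,ds+v(X_{\taom\wedge\tau})\}$ with the \emph{fixed} limit $v$; these are monotone in $m$, converge to the unstopped problem $z$ via \eqref{A4}, and the key appendix result Theorem~\ref{thma2} gives $z\in C(E)$, whence Dini upgrades to uniform convergence on compacta. A separate estimate (splitting according to $\{\taomk\le T\}$, $\{\taomk\wedge\tau\ge T\}$, and $\{\rho(x,X_{\taomk\wedge\tau})\ge R\}$) then shows $|w^{m_k}-z_{m_k}|\to 0$ on compacta. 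These two ingredients---the uniform stopping-time bound and the ergodic stopping lemma---are exactly what your sketch is missing.

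A minor point: in the final step the dichotomy (i)/(ii) is unnecessary here. For any admissible $V$ with $\ee_x^V\{\tau_n\}<\infty$, monotone convergence and $\tau_n\to\infty$ a.s.\ force $\ee_x^V\{\tau_n\}\to\infty$, so you are always in case (i), and the iterated identity together with boundedness of $w$ on $U$ gives $\lambda\le J(x,V)$ directly, with equality for $\hat V$.
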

\begin{proof}
Let $w^m$ be a solution to Bellman equation \eqref{eq9p} (which exists by Theorem \ref{thm1}).  Assume that $\lambda_{(m_k)}\to \lambda\leq \mu(f)-a$, with $a>0$, as $k\to \infty$. Since $M{w}^{m_k}(x)\leq \|c\|+\inf_{z\in U}{w}^{m_k}(z)=\|c\|$ and $M{w}^{m_k}(x)\geq c+\inf_{z\in U}{w}^{m_k}(z)=c>0$ as well as  $|M{w}^{m_k}(x)-M{w}^{m_k}(y)|\leq \sup_{\xi \in U}|c(x,\xi)-c(y,\xi)|$ the family of functions  $\left(M{w}^{m_k}\right)$ is bounded and uniformly continuous. Therefore there is a further subsequence, for simplicity still denoted by $m_k$ and a function $v\in C(E)$ such $M{w}^{m_k}$ converges uniformly on compact subsets to $v$ such that  $c\leq v\leq \|c\|$.  By \eqref{A1} and the fact that $M{w}^m\geq 0$
\begin{eqnarray}\label{eq8}
&&{w}^m(x)=\inf_\tau \ee_x\left\{(\taom\wedge \tau)(\mu(f)-\lambda_{(m)})  - q(X_{\taom\wedge \tau}) +  \right. \nonumber \\
&& \left. \ind{\tau < \taom}M{w}^m(x_\tau) \right\}+q(x) \geq
 q(x)+ \nonumber \\
&&  \inf_\tau \ee_x\left\{(\taom\wedge \tau)(\mu(f)-\lambda_{(m)})- q(X_{\taom\wedge \tau})\right\},
\end{eqnarray}
so that assuming that $\lambda_m<\mu(f)$, taking into account that ${w}^m(x)\leq M{w}^m(x)\leq \|c\|$ and $q(\cdot)\leq K$, in infimum in \eqref{eq8} we can restrict ourselves to stopping times $\tau$ such that
\begin{equation}
\ee_x\left\{(\taom\wedge \tau)\right\}\leq {\|c\|-q(x)+K \over \mu(f)-\lambda_m}.
\end{equation}
Consequently for a sufficiently large $k$, say $k\geq k_0$ we see that we can restrict ourselves in $w^m$ to stopping times $\tau$ such that
\begin{equation}\label{eq10}
\ee_x\left\{\taomk\wedge \tau\right\}\leq {\|c\|-q(x)+K \over a}.
\end{equation}
Let
\begin{equation}\label{eq10p}
z_m(x)=\inf_\tau \ee_x\left\{\int_0^{\taom\wedge \tau}(f(X_s)-\lambda)ds + v(X_{\taom\wedge \tau})\right\}.
\end{equation}
By the same arguments as above we can restrict in \eqref{eq10p} to stopping times such that \eqref{eq10} is satisfied.
Clearly $z_m(x)$ is a nonincreasing sequence and let $z(x):=\lim_{m\to \infty}z_m(x)$. We are going to show that $z(x)$ is of the form
\begin{equation}\label{eq11}
z(x)=\inf_\tau \ee_x\left\{\int_0^{\tau}(f(X_s)-\lambda)ds + v(X_{\tau})\right\}.
\end{equation}
We also can restrict in \eqref{eq11} to stopping times such that
\begin{equation}\label{eq11p}
\ee_x\left\{\tau\right\}\leq {\|c\|-q(x)+K \over a}.
\end{equation}
Therefore there is an $\epsilon$-optimal stopping time $\tau^*_\epsilon$ for \eqref{eq11} such that for some deterministic $T$ we have  $\tau^*_\epsilon\leq T$, $\prob_x$ a.s..
Then
\begin{eqnarray}
&&0\leq z_m(x)-z(x)\leq \ee_x\left\{\int_0^{\taom\wedge {\tau^*_\epsilon}}(f(X_s)-\lambda)ds + v(X_{\taom\wedge {\tau^*_\epsilon}})\right\}- \nonumber \\
&&\ee_x\left\{\int_0^{{\tau^*_\epsilon}}(f(X_s)-\lambda)ds + v(X_{{\tau^*_\epsilon}})\right\}+\epsilon \leq \nonumber \\
&&\ee_x\left\{{\tau^*_\epsilon}-\taom\wedge {\tau^*_\epsilon}\right\} \|f-\lambda\| +
\|c\|\prob_x\left\{\taom\leq T\right\} +\epsilon\nonumber \\
&& \leq \prob_x\left\{\taom\leq T\right\} T \|f-\lambda\|+ \|c\|\prob_x\left\{\taom\leq T\right\} +\epsilon,
\end{eqnarray}
which by \eqref{A4} converges to $\epsilon$. Consequently $z$ is of the form \eqref{eq11}.
By Theorem \ref{thma2} we have that $z\in C(E)$ and since by Proposition \ref{optnd} functions $z_m$ are continuous, using Dini's lemma we have that $z_m$ converges uniformly on compact subsets to $z$.
We shall evaluate now the difference between ${w}^{m_k}(x)$ and $z_{m_k}(x)$.
Using \eqref{A1} we see that we can restrict ourselves in $z_{m_k}$ to stopping times $\tau$ such that \eqref{eq10} is satisfied.
For such stopping times we have for $k\geq k_0$
\begin{eqnarray}\label{eq12}
&&|\ee_x\left\{\int_0^{\taom\wedge \tau}(f(X_s)-\lambda_{(m_k)})ds\right\}-\ee_x\left\{\int_0^{\taom\wedge \tau}(f(X_s)-\lambda)ds\right\}|\leq \nonumber \\
&&|\ee_x\left\{(\taom\wedge \tau)\right\}|\lambda_{(m_k)}-\lambda|\leq {\|c\|-q(x)+K \over a}|\lambda_{(m_k)}-\lambda|.
\end{eqnarray}
For stopping times $\tau$ satisfying \eqref{eq10} we also have (recall that $c\leq M\bar{w}^{m_k}\leq \|c\|$)
\begin{eqnarray}\label{eq13}
&&|\ee_x\left\{\ind{\tau < \taomk} M{w}^{m_k}(X_\tau)\right\}-\ee_x\left\{ v(X_{\taomk\wedge \tau})\right\}|\nonumber \\
&&\leq \ee_x\left\{\ind{\tau\geq\taomk}\right\}\|c\|  + \nonumber \\
&& |\ee_x\left\{M{w}^{m_k}(X_{\taomk \wedge \tau} ) - v(X_{\taomk\wedge \tau})\right\}|\leq \nonumber \\
&& \ee_x\left\{\ind{\tau\geq\taomk}\ind{\taomk\leq T}\right\}\|c\|+
\ee_x\left\{\ind{\tau\geq\taomk}\ind{\taomk > T}\right\}\|c\| +\nonumber \\
&&|\ee_x\left\{\ind{\taomk\wedge \tau \leq T}(M{w}^{m_k}(X_{\taomk \wedge \tau}) - v(X_{\taomk\wedge \tau}))\right\}| + \nonumber \\
&&\ee_x\left\{\ind{\taomk\wedge \tau \geq T}\|c\|\right\} \leq \ee_x\left\{\ind{\taomk\leq T}\right\}\|c\|+ \nonumber \\
&& \ee_x\left\{\ind{\taomk\wedge \tau \geq T}\right\}2\|c\|+  \nonumber \\
&& |\ee_x\left\{\ind{\taomk\wedge \tau \leq T}\ind{\rho(x,X_{\taomk \wedge \tau})\leq R} (M{w}^{m_k}(X_{\taomk \wedge \tau}) - \right. \nonumber \\
&& \left.  v(X_{\taomk\wedge \tau})\right\})|+ \ee_x\left\{\ind{\taomk\wedge \tau \leq T}\ind{\rho(x,X_{\taomk \wedge \tau})\geq R}\right\}\|c\| \nonumber \\
&& =b_k+e_k+g_k+h_k,
\end{eqnarray}
where $e_k\leq {\ee_x\left\{\taomk\wedge \tau\right\}\|c\| \over T}$ and $b_k$ converges to $0$ uniformly on compact sets by \eqref{A4}.
For $x$ from a compact set $K$ and given $\vep>0$ and $T>0$ one can find by Proposition 2.1 of  \cite{PalSte2010} $R>0$ such that
\begin{equation}\label{eq13'}
\sup_{x\in K}\prob_x\left\{\exists_{t\leq T} \ \rho(x,X(t))\geq R\right\}\leq \vep,
\end{equation}
where $\rho$ is the metric on $E$.
Therefore for $T$ sufficiently large $e_k\leq \epsilon$ for all $k$ and $x\in K$ and for fixed $T$ for large enough $k$ we have that $b_k\leq \epsilon$ for $x$ from $K$. From \eqref{eq13'} we have that $h_k\leq \vep \|c\|$. Finally for $x\in K$ there is $k_1$ such that for $k\geq k_1$
on the set $\left\{\taomk\wedge \tau \leq T\right\}\cap  \left\{\rho(x,X_{\taomk \wedge \tau})\leq R\right\}$ we have
$|M{w}^{m_k}(X_{\taomk \wedge \tau}) - v(X_{\taomk\wedge \tau})|\leq \vep$. Therefore for $x\in K$ and sufficiently large $k$ we can make $b_k+e_k+g_k+h_k$ arbitrarily small.

 Summarizing \eqref{eq12} and \eqref{eq13} we obtain that $|{w}^{m_k}(x)-z_{m_k}(x)|$ converges to $0$ uniformly on compact sets. Consequently ${w}^{m_k}(x)$ converges to $z(x)$ uniformly on compact sets and additionally $\inf_{\xi \in U} z(\xi)=0$.  Therefore $M{w}_{m_k}(x)$ converges uniformly to $Mz(x)=v(x)$ and
\begin{equation}\label{eq14}
z(x)=\inf_\tau \ee_x\left\{\int_0^{\tau}(f(X_s)-\lambda)ds + Mz(X_{\tau})\right\},
\end{equation}
which completes the proof of \eqref{form2} with $z\equiv w$.
Iterating the last formula we obtain inductively for each positive integer $n$
\begin{equation}\label{eq15}
{w}(x)=\inf_{V^n} \ee_x^{V^n}\left\{\int_0^{\tau_n}(f(Y_s)-\lambda) ds + \sum_{i=1}^n c(x_{\tau_i}^i,\xi_i)+{w}(\xi_n)\right\}.
\end{equation}
where $V^n$ denotes impulse strategy consisting of at most $n$ impulses. In fact, we have first \eqref{eq15} for so called shifted impulse strategies (see the definition in section 2 of \cite{BasSte2018}), which then can be extended using technics of section 7 of \cite{BasSte2018} to any impulse strategies consisting of $n$ impulses.   Notice that we can restrict ourselves to stopping times such that \eqref{eq11p} is satisfied.
Then
\begin{equation}
\lambda=\inf_{V^n} {1\over \ee_x^{V^n}\left\{\tau_n\right\}} \ee_x^{V^n}\left\{\int_0^{\tau_n} f(Y_s)ds + \sum_{i=1}^n c(x_{\tau_i}^i,\xi_i)+{w}(\xi_n)\right\}
\end{equation}
and then for any impulse strategy $V$ (since $w$ is bounded on $U$)
\begin{equation}
\lambda\leq \liminf_{n\to \infty} {1\over \ee_x^{V}\left\{\tau_n\right\}} \ee_x^{V}\left\{\int_0^{\tau_n} f(Y_s)ds + \sum_{i=1}^n c(x_{\tau_i}^i,\xi_i)\right\},
\end{equation}
with equality for an optimal impulse strategy $\hat{V}$ defined by \eqref{form2}. This completes the proof of \eqref{form1}.
\end{proof}

\begin{remark} Note first that by \eqref{A4} we have that
\begin{equation}\label{limit} \ee_x \Big\{\taom\Big\}\to \infty
\end{equation}
 as $m\to \infty$. In fact, $\ee_x \Big\{\taom\Big\}\geq T \prob_x\left\{\taom\geq T\right\}$. Letting $m\to \infty$, using \eqref{A4} we obtain $\liminf_{m\to \infty} \ee_x \Big\{\taom\Big\}\geq T$ for any $T>0$ form which \eqref{limit} follows. Then for no impulse strategy we have using \eqref{A1}
 \begin{equation}
 {\ee_x \Big\{\int_0^{\taom} f(X_s)ds\Big\}\over {\ee_x \Big\{\taom \Big\}}}={q(x)-\ee_x \left\{q(X_{\taom})\right\} \over {\ee_x \Big\{\taom \Big\}}} +\mu(f).
 \end{equation}
Letting $m\to \infty$, taking into account that $q$ is bounded from above we obtain
\begin{equation}\label{limitt}
\liminf_{m\to \infty}{\ee_x \Big\{\int_0^{\taom} f(X_s)ds\Big\}\over {\ee_x \Big\{\taom \Big\}}}\geq \mu(f)
\end{equation}
When $q$ is bounded we have equality in \eqref{limitt} and in Theorem \ref{thm2} we can write $\lim_{m\to \infty} \lambda_m=\lambda \leq \mu(f)$, and when $\lambda <\mu(f)$ we have \eqref{form1} and \eqref{form2}.
\end{remark}

We can now formulate a result concerning the functional \eqref{eqn:functional}.
\begin{proposition}
Under the assumptions of Theorem \ref{thm2} for any impulse strategy $V$  we have that
\begin{equation}\label{eq16}
\lambda\leq \liminf_{T\to \infty}  \frac{1}{T} \ee_x^V \Big\{ \int_0^{T} f(Y_s) ds + \sum_{i = 1}^\infty \ind{\tau_i\leq T} c(x_{\tau_i}^i, \xi_i) \Big\}.
\end{equation}
If for the strategy $\hat{V}$ defined in Theorem \ref{thm2} we have that
$\lim_{T\to \infty} {1 \over  T}\ee_x^{\hat{V}}\left\{w(Y_T)\right\}=0$
then
\begin{equation}\label{eq16'}
\lambda=\inf_V \hat{J}(x,V)=\hat{J}(x,\hat{V}).
\end{equation}
\end{proposition}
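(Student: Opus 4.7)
The plan is to derive both claims from the Bellman equation \eqref{form2} by iterating over impulses and using the uniform bound $Mw \leq \|c\|$. For the lower bound \eqref{eq16}, I would apply \eqref{form2} with the stopping time $\tau_1\wedge T$ to get
\begin{equation*}
w(x)\le \ee_x^V\Big\{\int_0^{\tau_1\wedge T}(f(Y_s)-\lambda)\,ds + \ind{\tau_1\le T}Mw(x_{\tau_1}^1) + \ind{\tau_1>T}Mw(Y_T)\Big\}.
\end{equation*}
On $\{\tau_1\le T\}$ I would bound $Mw(x_{\tau_1}^1)\le c(x_{\tau_1}^1,\xi_1)+w(\xi_1)$ (since $Mw$ is an infimum over $\xi\in U$ and $\xi_1\in U$), and on $\{\tau_1>T\}$ use $Mw(Y_T)\le \|c\|$. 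Applying \eqref{form2} again from $\xi_1$ via the strong Markov property and iterating $n$ times yields
\begin{equation*}
w(x)\le \ee_x^V\Big\{\int_0^{\tau_n\wedge T}(f(Y_s)-\lambda)\,ds + \sum_{i=1}^n \ind{\tau_i\le T}c(x_{\tau_i}^i,\xi_i) + \ind{\tau_n\le T}w(\xi_n) + \ind{\tau_n>T}\|c\|\Big\}.
\end{equation*}
Since $\tau_n\to\infty$ almost surely and $f,w,c$ are bounded, dominated convergence as $n\to\infty$ gives
\begin{equation*}
w(x)+\lambda T \le \ee_x^V\Big\{\int_0^{T}f(Y_s)\,ds + \sum_{i=1}^\infty \ind{\tau_i\le T}c(x_{\tau_i}^i,\xi_i)\Big\} + \|c\|.
\end{equation*}
Dividing by $T$ and taking $\liminf_{T\to\infty}$ yields \eqref{eq16}.

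For \eqref{eq16'} the same scheme works but with equality replacing inequality along $\hat V$. Since $\hat\tau_1$ is the entry time of the coincidence set $\{w=Mw\}$ for the optimal stopping problem embedded in \eqref{form2}, the standard Snell-envelope argument gives that $w(X_{t\wedge\hat\tau_1})+\int_0^{t\wedge\hat\tau_1}(f(X_s)-\lambda)\,ds$ is a $\prob_x$-martingale, and at $\hat\tau_1$ the choice $\hat\xi_1=\argmin_{z\in U}[c(X_{\hat\tau_1},z)+w(z)]$ realizes $Mw(X_{\hat\tau_1})=c(X_{\hat\tau_1},\hat\xi_1)+w(\hat\xi_1)$. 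Optional sampling at $\hat\tau_1\wedge T$, iterated over impulses via the strong Markov property, produces the exact identity
\begin{equation*}
w(x)=\ee_x^{\hat V}\Big\{\int_0^{T}(f(Y_s)-\lambda)\,ds + \sum_{i=1}^\infty \ind{\hat\tau_i\le T}c(x_{\hat\tau_i}^i,\hat\xi_i) + w(Y_T)\Big\}.
\end{equation*}
Rearranging, dividing by $T$, and invoking the hypothesis $T^{-1}\ee_x^{\hat V}\{w(Y_T)\}\to 0$ forces $\hat J(x,\hat V)=\lambda$, which combined with \eqref{eq16} proves \eqref{eq16'}.

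The main technical obstacle is the passage to infinitely many impulses in the iteration: one needs $\ee_x^V\{N(0,T)\}<\infty$ where $N(0,T)$ counts impulses on $[0,T]$, which follows from $c(\cdot,\cdot)\ge c>0$ and boundedness of $w,f$, exactly as exploited in \eqref{eq64}. A secondary subtlety is upgrading the Bellman inequality to equality along $\hat V$, which depends on $\hat\tau_1$ being almost surely finite and optimal for the embedded stopping problem; here one argues as in the reduction to bounded stopping times \eqref{eq10}--\eqref{eq11p} using continuity of $w$ and $Mw$ together with the attainment of the $\argmin$ in the definition of $Mw$.
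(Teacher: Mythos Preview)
Your proposal is correct and follows essentially the same approach as the paper: iterate the Bellman equation \eqref{form2} up to time $\tau_n\wedge T$, let $n\to\infty$ using $\tau_n\to\infty$ a.s.\ and boundedness, then divide by $T$ and use $w\le Mw\le\|c\|$ for the inequality and the hypothesis $T^{-1}\ee_x^{\hat V}\{w(Y_T)\}\to 0$ for the equality. The only cosmetic differences are that the paper keeps the terminal term as $w(Y_{\tau_n\wedge T})$ throughout (rather than splitting cases and replacing by $\|c\|$ on $\{\tau_n>T\}$), and that the paper simply asserts equality for $\hat V$ in the iterated identity whereas you spell out the Snell-envelope/martingale justification; these are the same argument at different levels of detail.
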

\begin{proof}
Iterating \eqref{form2} for any $T>0$ we have
\begin{equation}
w(x)\leq \ee_x^V\left\{ \int_0^{\tau_n\wedge T}(f(Y_s)-\lambda)ds + \sum_{i=1}^n \ind{\tau_i\leq T} c(x_{\tau_i}^i,\xi_i)+w(\tau_n\wedge Y_T)\right\}
\end{equation}
with equality for the strategy $\hat{V}$. Taking into account that any nearly optimal strategy $V$ consists of at most  a finite number of impulses on the time interval $[0,T]$ we obtain for any such strategy $V$
\begin{equation}
w(x)\leq \ee_x^V\left\{ \int_0^T(f(Y_s)-\lambda)ds + \sum_{i=1}^\infty \ind{\tau_i\leq T} c(x_{\tau_i}^i,\xi_i)+w(Y_T)\right\}
\end{equation}
with equality for the strategy $\hat{V}$.
Then since $w(Y_T)\leq \|c\|$ we have
\begin{eqnarray}\label{eq16''}
&& \lambda \leq \liminf_{T\to \infty}{1 \over T} \ee_x^V\left\{ \int_0^T f(Y_s)ds + \sum_{i=1}^\infty \ind{\tau_i\leq T} c(x_{\tau_i}^i,\xi_i)+w(Y_T)\right\} \nonumber \\
&& \leq \liminf_{T\to \infty} {1 \over T} \ee_x^V\left\{ \int_0^T f(Y_s)ds + \sum_{i=1}^\infty \ind{\tau_i\leq T} c(x_{\tau_i}^i,\xi_i)\right\}
\end{eqnarray}
and \eqref{eq16} follows. For the strategy $\hat{V}$ when  $\lim_{T\to \infty} {1 \over  T}\ee_x^{\hat{V}}\left\{w(Y_T)\right\}=0$ we obtain equality in \eqref{eq16''}, which completes the proof of \eqref{eq16}.
\end{proof}

\begin{corollary}
Under the assumptions of Theorem \ref{thm2} assuming additionally that  the set $G=\left\{x: f(x)\leq \lambda\right\}$ is compact we have that the function $w$ is bounded and consequently we have \eqref{eq16'}.
\end{corollary}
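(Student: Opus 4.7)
The plan is to establish the single extra ingredient needed to invoke the preceding proposition, namely that the function $w$ from Theorem~\ref{thm2} is bounded; once this is known, $\frac{1}{T}\ee_x^{\hat V}\{w(Y_T)\}\to 0$ is trivial, and \eqref{eq16'} follows immediately.

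An upper bound on $w$ is essentially free: taking the trivial stopping time $\tau=0$ in the Bellman equation \eqref{form2} gives $w(x)\leq Mw(x)\leq \|c\|$ on all of $E$, since $\inf_{\xi\in U}w(\xi)=0$. The content of the corollary is a uniform finite lower bound, and here compactness of $G=\{f\leq\lambda\}$ is used. Because $w\in C(E)$, the number $m:=\inf_{y\in G}w(y)$ is finite, so it suffices to show $w(x)\geq \min(c,m)$ for $x\notin G$ as well. The tool is the hitting time $T_G=\inf\{s\geq 0: X_s\in G\}$. By the same argument as in the proof of Theorem~\ref{thm2}, the infimum defining $w(x)$ can be restricted to stopping times $\tau$ with $\ee_x\{\tau\}<\infty$, hence $\tau<\infty$ a.s. I would then split each such $\tau$ into the events $\{\tau<T_G\}$ and $\{\tau\geq T_G\}$. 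On the former, $X_s\notin G$ for $s\leq\tau$ forces $f(X_s)>\lambda$ and thus $\int_0^\tau (f-\lambda)\,ds\geq 0$, while $Mw(X_\tau)\geq c$, contributing at least $c\,\prob_x(\tau<T_G)$. On the latter, the piece $\int_0^{T_G}(f-\lambda)\,ds$ is again nonnegative, and by the strong Markov property at $T_G$ together with the Bellman equation applied to the shifted process, the tail $\int_{T_G}^\tau (f-\lambda)\,ds+Mw(X_\tau)$ has conditional expectation at least $w(X_{T_G})\geq m$. Adding the two contributions and taking the infimum over $\tau$ gives $w(x)\geq \min(c,m)$, so $w$ is bounded.

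The main technical obstacle will be the clean implementation of the strong Markov step at $T_G$: one must verify that on $\{\tau\geq T_G\}$ the shifted time $\tau-T_G$ is a stopping time for the shifted filtration, and that the bound $w(X_{T_G})$ genuinely controls the tail expectation even when $\tau$ is unbounded — this uses the interpretation of \eqref{form2} discussed in the remark following \eqref{eq9p} together with $\ee_x\{\tau\}<\infty$. With $w$ bounded, the hypothesis $\lim_{T\to\infty}\frac{1}{T}\ee_x^{\hat V}\{w(Y_T)\}=0$ of the previous proposition is automatic, and \eqref{eq16'} follows without further work.
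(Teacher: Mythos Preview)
Your proposal is correct and matches the paper's argument: the upper bound $w\leq\|c\|$ from $\tau=0$, and the lower bound by splitting at $T_G$, using nonnegativity of $\int_0^{\tau\wedge T_G}(f-\lambda)\,ds$, the bound $Mw\geq c$, and $w(X_{T_G})\geq m:=\inf_{G}w>-\infty$ by continuity and compactness. The one difference is packaging: the ``technical obstacle'' you flag---the strong Markov step at $T_G$---is handled in the paper not by a direct argument but by invoking the variant Bellman identity \eqref{aeq3} from Theorem~\ref{thma2} (applied to $f-\lambda$ and obstacle $Mw$, with $\sigma=T_G$), which immediately gives
\[
w(x)=\inf_\tau \ee_x\Big\{\int_0^{\tau\wedge T_G}(f(X_s)-\lambda)\,ds+\ind{\tau<T_G}Mw(X_\tau)+\ind{T_G\leq\tau}w(X_{T_G})\Big\},
\]
so the splitting and the tail control come for free.
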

\begin{proof}
We use an analog of \eqref{aeq3}, namely
\begin{equation}
w(x)=\inf_\tau \ee_x\left\{\int_0^{\tau \wedge T_{G}}(f(X_s)-\lambda)ds + \ind{\tau<T_G}Mw(X_\tau) + \ind{T_G\leq \tau}w(X_{T_G})\right\}.
\end{equation}
Then since $Mw(x)\geq c>0$
\begin{equation}
w(x)\geq c+ \inf_\tau \ee_x\left\{\ind{T_G\leq \tau}w(X_{T_G})\right\},
\end{equation}
which is bounded from below by continuity of $w$ and compactness of $G$. Since $w\leq \|c\|$ function $w$ is bounded.
\end{proof}

\section{Appendix}\label{S:app}
We recall Theorem 4.3 of \cite{PalSte2011} (formulated there for supremum of stopping times)
\begin{theorem}\label{optd}
Assume that $f,G,H\in C(E)$, $G\geq H$ and $\alpha>0$. Under the assumptions that $T_t C_0(E)\subset C_0(E)$, $T_t^{\cal{O}}C(E)\subset C(E)$ we have that
\begin{eqnarray}
w_\alpha(x)&:=& \inf_\tau \ee_x\left\{\int_0^{\tau \wedge \tau_{\cal{O}}}e^{-\alpha s}f(X_s)ds + \ind{\tau<\tau_{\cal{O}}}e^{-\alpha \tau}G(X_\tau)+ \right. \nonumber\\
&& \left. \ind{\tau\geq\tau_{\cal{O}}}e^{-\alpha \tau_{\cal{O}}}H(X_{\tau_{\cal{O}}})\right\}
\end{eqnarray}
is continuous and bounded and an optimal stopping time $\tau^*_\alpha$ is given by the formula
\begin{equation}
\tau^*_\alpha=\inf\left\{s\geq 0: w_\alpha(X_s)\geq G(X_s) \ \  or \ \ X_s\notin \cal{O}\right\}.
\end{equation}
\end{theorem}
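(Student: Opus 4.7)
The plan is to establish Theorem~\ref{optd} by a penalty approximation combined with a compactness/equicontinuity argument, invoking both Feller hypotheses: $T_tC_0(E)\subset C_0(E)$ to handle the integrated running cost and $T_t^{\cal O}C(E)\subset C(E)$ to propagate continuity through the stopped dynamics. A preliminary simplification: since $G\geq H$, any stopping rule with $\tau\geq \tau_{\cal O}$ contributes exactly $e^{-\alpha\tau_{\cal O}}H(X_{\tau_{\cal O}})$, so one may from the start restrict to stopping times of the form $\tau\wedge \tau_{\cal O}$.

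First, for each $n\in\mathbb{N}$ introduce the penalized value $w_\alpha^n\in C(E)$ as the unique fixed point of the map
\[
v\;\mapsto\;\ee_{\cdot}\Big\{\int_0^{\tau_{\cal O}} e^{-\alpha s}\bigl[f(X_s)-n(v(X_s)-G(X_s))^+\bigr]\,ds + e^{-\alpha \tau_{\cal O}}H(X_{\tau_{\cal O}})\Big\}
\]
on $C(E)$ equipped with the sup norm. Existence and continuity of $w_\alpha^n$ are routine: the $n$-Lipschitz nonlinearity is absorbed by iterating on a sufficiently short horizon (on which the stopped semigroup is a contraction in the discounted norm), and the Feller hypotheses guarantee that each step of the iteration preserves $C(E)$. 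A standard comparison yields $w_\alpha^{n+1}\leq w_\alpha^n$, uniform $L^\infty$ bounds, and in the limit $\bar w_\alpha:=\lim_n w_\alpha^n\leq G$ on $\cal O$.

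The crux of the proof is to show that the family $(w_\alpha^n)$ is equicontinuous on compact subsets of $E$. The boundary term and the $f$-integral are continuous in $x$ directly by the hypothesis $T_t^{\cal O}C(E)\subset C(E)$, so only the penalty term needs work. The key a priori estimate is that the total penalty mass $n\,\ee_x\int_0^{\tau_{\cal O}}e^{-\alpha s}(w_\alpha^n-G)^+\,ds$ stays bounded uniformly in $n$, which follows by comparing $w_\alpha^n$ with the no-penalty reference solution (continuous by the Feller hypothesis, and hence uniformly bounded on compact sets). Combined with the monotone pointwise convergence $w_\alpha^n\downarrow \bar w_\alpha$ and Dini's lemma this gives $\bar w_\alpha\in C(E)$ with uniform convergence on compacts.

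Finally, one identifies $\bar w_\alpha = w_\alpha$ and verifies optimality of $\tau^*_\alpha$. For $\bar w_\alpha\leq w_\alpha$: given any admissible $\tau$, apply optional sampling to $e^{-\alpha t}w_\alpha^n(X_t)+\int_0^t e^{-\alpha s}f(X_s)\,ds$ up to $\tau\wedge\tau_{\cal O}$, use $\bar w_\alpha\leq G$ on the payoff side, and let $n\to\infty$. For the reverse inequality, evaluate the functional along $\tau^*_\alpha$: on $\{s<\tau^*_\alpha\}$ one has $\bar w_\alpha(X_s)<G(X_s)$ so the penalty vanishes in the limit, while at the stopping instant quasi-left continuity of $(X_t)$ together with continuity of $\bar w_\alpha$ and $G$ gives $\bar w_\alpha(X_{\tau^*_\alpha})=G(X_{\tau^*_\alpha})$ on $\{\tau^*_\alpha<\tau_{\cal O}\}$, while on $\{\tau^*_\alpha=\tau_{\cal O}\}$ the value is $H(X_{\tau_{\cal O}})$ by definition. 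The main obstacle I anticipate is the equicontinuity step: the penalty coefficient $n$ blows up pointwise, and uniform regularity must be extracted from the controlled total penalty mass together with the smoothing provided by $T_t^{\cal O}$; the remaining steps are standard fixed-point and martingale manipulations.
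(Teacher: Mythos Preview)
The paper does not prove Theorem~\ref{optd}; it is quoted from \cite{PalSte2011}, Theorem~4.3 (stated there for a supremum). Your penalty-method plan is exactly the approach of that reference---the penalty equation you introduce is, up to a sign convention, the one the present paper recalls in the proof of Proposition~\ref{optnd}. So at the level of overall strategy your proposal matches the intended proof.

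The gap is in the continuity step. You write that bounded total penalty mass, monotone convergence, and ``Dini's lemma'' give $\bar w_\alpha\in C(E)$, but Dini's lemma runs the other way: it \emph{assumes} the limit is continuous and delivers uniform convergence. A monotone decreasing sequence of continuous $w_\alpha^n$ yields only upper semicontinuity of the pointwise limit; lower semicontinuity is precisely the nontrivial half, and a uniform bound on $n\,\ee_x\int_0^{\tau_{\cal O}}e^{-\alpha s}(w_\alpha^n-G)^+(X_s)\,ds$ does not by itself control the modulus of continuity of this map uniformly in $n$. You correctly flag this as the main obstacle, but ``penalty mass plus smoothing by $T_t^{\cal O}$'' is a slogan, not an argument. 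To close the gap you need either a second, \emph{increasing} continuous approximation of $w_\alpha$ (so the common limit is both u.s.c.\ and l.s.c.), or a randomized-stopping/control representation of $w_\alpha^n$ whose continuity in $x$ is stable as $n\to\infty$---which is essentially how \cite{PalSte2011} proceeds.
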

For an undiscounted case we have
\begin{proposition}\label{optnd}
Under the assumptions of Theorem \ref{optd} and $\sup_{x\in E}\ee_x\left\{(\tao)^2\right\}<\infty$
we have that $w_\alpha$ converges uniformly to $w$, as $\alpha \to 0$, where
\begin{equation}
w(x):=\inf_\tau \ee_x\left\{\int_0^{\tau \wedge \tao}f(X_s)ds + \ind{\tau<\tau_{\cal{O}}}G(X_\tau)+  \ind{\tau\geq\tau_{\cal{O}}}H(X_{\tau_{\cal{O}}})\right\}
\end{equation}
is continuous bounded and an optimal stopping time $\tau^*$ is given by the formula
\begin{equation}
\tau^*=\inf\left\{s\geq 0: w(X_s)\geq G(X_s) \ \  or \ \ X_s\notin \cal{O}\right\}.
\end{equation}
\end{proposition}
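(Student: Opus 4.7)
The plan is to establish uniform convergence $w_\alpha\to w$ via a per-stopping-time error bound, deduce continuity and boundedness of $w$ as the uniform limit of continuous bounded functions, and then transfer the optimal-stopping characterization from Theorem~\ref{optd} by passing to the limit $\alpha\to 0$.

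For the uniform convergence, for any stopping time $\tau$ denote by $J_\alpha(x,\tau)$ the expected payoff inside the infimum defining $w_\alpha$ and by $J(x,\tau)$ its undiscounted counterpart inside $w$. Using $|1-e^{-\alpha s}|\leq \alpha s$ for $s\geq 0$ together with boundedness of $f,G,H$, one obtains
\[
|J_\alpha(x,\tau)-J(x,\tau)| \leq \tfrac{\alpha\|f\|}{2}\ee_x\{\tao^2\} + \alpha(\|G\|+\|H\|)\ee_x\{\tao\},
\]
which, by the second-moment assumption (and $\ee_x\{\tao\}\leq \sqrt{\ee_x\{\tao^2\}}$), is bounded by $C\alpha$ uniformly in $\tau$ and in $x\in E$. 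Taking the infimum over $\tau$ on both sides yields $\sup_{x\in E}|w_\alpha(x)-w(x)|\leq C\alpha$, so $w_\alpha\to w$ uniformly; since Theorem~\ref{optd} gives each $w_\alpha\in C(E)$ with $\|w_\alpha\|$ uniformly bounded in $\alpha$, the uniform limit $w$ is continuous and bounded.

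For optimality of $\tau^*$, the plan is to avoid comparing $\tau^*_\alpha$ with $\tau^*$ directly and instead verify the undiscounted Bellman identity for $w$: for every stopping time $\sigma$,
\[
w(x) \leq \ee_x\Big\{\int_0^{\sigma\wedge \tao} f(X_s)\,ds + \ind{\sigma<\tao}w(X_\sigma) + \ind{\sigma\geq \tao}H(X_{\tao})\Big\},
\]
with equality for $\sigma\leq \tau^*$. This is obtained by passing to the limit in the analogous identity for $w_\alpha$ supplied by Theorem~\ref{optd}, using uniform convergence and dominated convergence (justified by the moment bound on $\tao$). Continuity of $w$ and $G$ makes $\{w\geq G\}\cup(E\setminus \cal{O})$ closed, so $\tau^*$ is a stopping time; the Snell-envelope argument then shows that $w(X_{t\wedge \tao})+\int_0^{t\wedge \tao}f(X_s)\,ds$ is a submartingale on $\cal{O}$ and a martingale up to $\tau^*\wedge \tao$, whence optional stopping together with quasi-leftcontinuity of $(X_t)$ yields $w(x)=\ee_x\{J(x,\tau^*)\}$, i.e.\ optimality of $\tau^*$.

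The main obstacle is this last step: one cannot simply pass $\tau^*_\alpha\to\tau^*$ because the level sets $\{w_\alpha\geq G\}$ may shift irregularly with $\alpha$, so control of the limit must come through the equation for $w$ rather than through the stopping times themselves. The moment bound on $\tao$ is what drives the dominated-convergence steps, and the strictly positive lower bound would not be needed here because all payoffs are bounded.
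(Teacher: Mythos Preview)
Your uniform-convergence argument matches the paper's exactly: both compare $J_\alpha(x,\tau)$ and $J(x,\tau)$ via $1-e^{-\alpha s}\leq\alpha s$ and the second-moment bound on $\tao$, obtaining $\sup_x|w_\alpha(x)-w(x)|\leq C\alpha$, whence continuity and boundedness of $w$.

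The gap is in the optimality step. You correctly identify the obstacle---one cannot pass from $\tau_\alpha^*$ to $\tau^*$ directly---but your proposed cure does not escape it. The dynamic-programming identity with \emph{equality} for $\sigma\leq\tau_\alpha^*$ does pass to the limit for any \emph{fixed} $\sigma$ satisfying $\sigma\leq\tau_\alpha^*$ for all small $\alpha$; but a generic $\sigma\leq\tau^*$ need not satisfy this, because uniform convergence $w_\alpha\to w$ gives no control on the direction in which the contact set $\{w_\alpha\ge G\}$ moves. Invoking ``the Snell-envelope argument'' to deduce the martingale property up to $\tau^*$ is circular here: that martingale property is precisely what has to be established, and the standard continuous-time proofs of it go through exactly the kind of approximation you are trying to avoid.

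The paper's device is to interpose the $\epsilon$-shifted hitting time
\[
\tau^\epsilon=\inf\{s\ge0:\ w(X_s)+\epsilon\ge G(X_s)\ \text{or}\ X_s\notin\cal{O}\}.
\]
Once $\|w_\alpha-w\|<\epsilon$, the inclusion $\{w_\alpha\ge G\}\subset\{w+\epsilon\ge G\}$ forces $\tau^\epsilon\le\tau_\alpha^*$, so the discounted Bellman identity holds at $\tau^\epsilon$ (the paper extracts this via the penalty equation of \cite{PalSte2011}, but the essential point is the inequality $\tau^\epsilon\le\tau_\alpha^*$). Letting $\alpha\to0$ then gives
\[
w(x)=\ee_x\Big\{\int_0^{\tao\wedge\tau^\epsilon}f(X_s)\,ds+\ind{\tau^\epsilon<\tao}w(X_{\tau^\epsilon})+\ind{\tao\le\tau^\epsilon}H(X_{\tao})\Big\}.
\]
Finally $\tau^\epsilon\uparrow\tau^*$ as $\epsilon\downarrow0$, and quasi-left continuity of $(X_t)$ yields the identity at $\tau^*$. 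This $\epsilon$-shift is the missing ingredient in your outline; without it, the passage from the submartingale inequality to the martingale equality on $\{w<G\}$ is not justified.
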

\begin{proof}
Since $1-e^{-\alpha s}\leq \alpha s$, we have
\begin{eqnarray}
|w(x)-w_\alpha(x)|&\leq& \ee_x\left\{\int_0^{\tao} (1-e^{-\alpha s})\|f\|ds + (1-e^{-\alpha \tao})(\|G\|\vee \|H\|)\leq \right. \nonumber \\
&& \left. {\alpha \over 2} \ee_x\left\{(\tao)^2\right\}\|f\| + \alpha \ee_x\left\{\tao\right\}(\|G\|\vee \|H\|) \right\}
\end{eqnarray}
from which the convergence and continuity of $w$ follows. To show optimality of $\tau^*$ recall so called penalty equation corresponding to $w_\alpha$, see (4) of \cite{PalSte2011}:
\begin{equation}
w_\alpha^\beta(x)=\ee_x\left\{\int_0^{\tao} e^{-\alpha s} \left(f(X_s)+\beta(w_\alpha^\beta(X_s)-G(X_s))^+\right)ds + e^{-\alpha \tao}H(X_{\tao})\right\}.
\end{equation}
By Lemma 2.2 of \cite{PalSte2011} such function $w_\alpha^\beta$ exists and by Proposition 2.9 of \cite{PalSte2011} it decreases to $w_\alpha$. For any stopping time $\tau$ we can write
\begin{eqnarray}\label{ap1}
w_\alpha^\beta(x)&=&\ee_x\left\{\int_0^{\tao\wedge \tau} e^{-\alpha s} \left(f(X_s)+\beta(w_\alpha^\beta(X_s)-G(X_s))^+\right)ds + \ind{\tau<\tao}e^{-\alpha \tau} w_\alpha^\beta(X_\tau) + \right. \nonumber \\
&&\left. \ind{\tao\leq \tau}e^{-\alpha \tao}H(X_{\tao})\right\}.
\end{eqnarray}
By Theorem 4.3 we know that
\begin{equation}
\tau_\alpha^*=\inf\left\{s\geq 0: w_\alpha(X_s)\geq G(X_s) \ or \ X_s\notin \cal{O}\right\}
\end{equation}
is an optimal time for $w_\alpha$. Let  $\tau^\epsilon=\inf\left\{s\geq 0: w(X_s)+\epsilon\geq G(X_s) \ or \ X_s\notin \cal{O}\right\}$. When $\|w_\alpha-w\|\leq \epsilon$ we have that whenever $w_\alpha(X_s)\geq G(X_s)$ then also $w(X_s)+\epsilon\geq G(X_s)$ which means that $\tau^\epsilon\leq \tau_\alpha^*$. Consequently from \eqref{ap1}
we obtain
\begin{equation}
w_\alpha^\beta(x)=\ee_x\left\{\int_0^{\tao\wedge \tau^\epsilon} e^{-\alpha s} f(X_s)ds + \ind{\tau^\epsilon<\tao}e^{-\alpha \tau^\epsilon} w_\alpha^\beta(X_{\tau^\epsilon}) + \ind{\tao \leq \tau^\epsilon}e^{-\alpha \tao}H(X_{\tao})\right\}.
\end{equation}
Letting in the last equation $\beta \to \infty$ and then $\alpha \to 0$ we finally obtain
\begin{equation}
w(x)=\ee_x\left\{\int_0^{\tao\wedge \tau^\epsilon} f(X_s)ds + \ind{\tau^\epsilon<\tao} w(X_{\tau^\epsilon}) + \ind{\tao \leq \tau^\epsilon}H(X_{\tao})\right\}.
\end{equation}
It remains now to notice that $\tau^\epsilon$ with $\epsilon \to 0$ form an increasing sequence  of  stopping times which by quasileftcontinuity of $(X_s)$ (see Theorem 3.13 of \cite{Dyn1965}) converges to $\tau^*$ and therefore by continuity of $w$ we obtain
\begin{equation}
w(x)=\ee_x\left\{\int_0^{\tao\wedge \tau^*} f(X_s)ds + \ind{\tau^*<\tao} w(X_{\tau^*}) + \ind{\tao \leq \tau^*}H(X_{\tao})\right\}.
\end{equation}
completing this way the proof of optimality of $\tau^*$.
\end{proof}

We finally formulate an ergodic stopping result
\begin{theorem}\label{thma2}
Under \eqref{A1} let $f, w\in C(E)$ and $\mu(f)>0$. Then
\begin{equation}\label{aeq1}
z(x):=\inf_\tau \ee_x\left\{\int_0^{\tau}f(X_s)ds + w(X_{\tau})\right\}
\end{equation}
is continuous and an optimal stopping time $\tau^*$ is of the form
\begin{equation}\label{aeq2}
\tau^*=\inf\left\{s\geq 0: z(X_s)\geq w(X_s)\right\}.
\end{equation}
Furthermore for any stopping time $\sigma$ we have the following version of Bellman equation
\begin{equation}\label{aeq3}
z(x)=\inf_\tau \ee_x\left\{\int_0^{\tau\wedge \sigma}f(X_s)ds + \ind{\tau<\sigma} w(X_{\tau})+\ind{\sigma \leq \tau} z(X_\sigma)\right\}.
\end{equation}
\end{theorem}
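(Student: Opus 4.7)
The plan is to use the additive Poisson equation from \eqref{A1} to reduce \eqref{aeq1} to an infimum over stopping times with locally bounded expectation, then approximate $z$ by problems also stopped at $\taom$ to which Proposition \ref{optnd} directly applies, and finally deduce the Bellman equation \eqref{aeq3} and optimality of $\tau^*$ by standard dynamic-programming arguments.

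For the reduction, the APE in \eqref{A1} gives, for any bounded stopping time $\tau$, the identity
\[
\ee_x\Big\{\int_0^\tau f(X_s)\,ds\Big\}=\mu(f)\ee_x\{\tau\}+q(x)-\ee_x\{q(X_\tau)\},
\]
which extends to any $\tau$ with $\ee_x\{\tau\}<\infty$ by uniform integrability of $(q(X_t))$. Substituted into \eqref{aeq1} this yields $z(x)=q(x)+\inf_\tau\ee_x\{\mu(f)\tau+w(X_\tau)-q(X_\tau)\}$; choosing $\tau\equiv 0$ gives $z(x)\leq w(x)$, and conversely, since $q\leq K$ and $w$ is bounded, the infimum is unchanged if restricted to $\tau$ with $\mu(f)\ee_x\{\tau\}\leq w(x)-q(x)+\|w\|+K$, a bound locally uniform in $x$ by continuity of $w$ and $q$.

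Next, define
\[
z_m(x):=\inf_\tau\ee_x\Big\{\int_0^{\tau\wedge\taom}f(X_s)\,ds+\ind{\tau<\taom}w(X_\tau)+\ind{\taom\leq\tau}w(X_{\taom})\Big\}.
\]
By \eqref{A2}--\eqref{A3}, Proposition \ref{optnd} gives $z_m\in C(E)$ together with an explicit optimal stopping time. Since every $\tau\wedge\taom$ is admissible in \eqref{aeq1}, $z_m\geq z$, and the sequence is nonincreasing in $m$. To show $z_m\to z$ uniformly on compact sets $K\subset E$, I would pick an $\epsilon$-optimal $\tau_\epsilon$ with $\ee_x\{\tau_\epsilon\}\leq C_K$, truncate at a large $T$ (so that the tail beyond $T$ costs at most $\epsilon$ on $K$, using the APE identity for $\tau_\epsilon\wedge T$ together with uniform integrability of $(q(X_t))$), and plug $\tau_\epsilon\wedge T\wedge\taom$ into $z_m$; the residual error $(T\|f\|+2\|w\|)\prob_x\{\taom\leq T\}$ vanishes uniformly on $K$ by \eqref{A4}. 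Continuity of $z$ then follows as the locally uniform limit of continuous functions.

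For the Bellman equation \eqref{aeq3}, the $\geq$ direction comes from splitting any $\tau$ at $\sigma$ and applying the strong Markov property to bound the post-$\sigma$ part below by $z(X_\sigma)$; the $\leq$ direction follows by concatenating any stopping time in $[0,\sigma)$ with an $\epsilon$-optimal stopping time for $z(X_\sigma)$. Optimality of $\tau^*$ from \eqref{aeq2} then follows by applying \eqref{aeq3} with $\sigma=\tau^*$ (finite in expectation by the APE bound on $\ee_x\{\tau^*\}$) and using $z(X_{\tau^*})=w(X_{\tau^*})$, which holds since $z\leq w$ and $\tau^*$ is the first entry of the closed set $\{z=w\}$ by continuity of $z,w$. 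The main obstacle is the uniform-on-compacta convergence $z_m\to z$: it requires simultaneously controlling the tail $\prob_x\{\tau_\epsilon>T\}$ and the probability $\prob_x\{\taom\leq T\}$, which forces a careful combined use of all four hypotheses \eqref{A1}--\eqref{A4}; once this is done, the remaining steps are routine.
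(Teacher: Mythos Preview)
Your approach has a genuine gap: the theorem is stated under \eqref{A1} alone, but your proof explicitly invokes \eqref{A2}, \eqref{A3} and \eqref{A4}. You need \eqref{A2}--\eqref{A3} to apply Proposition~\ref{optnd} to your truncated problems $z_m$, and you need \eqref{A4} to pass to the limit $z_m\to z$ uniformly on compacts. So what you have written proves a strictly weaker statement than Theorem~\ref{thma2}; since this theorem is invoked inside the proof of Theorem~\ref{thm2} precisely to supply continuity of $z$ independently of the spatial truncation, weakening it to require \eqref{A2}--\eqref{A4} would make the overall argument partly circular in spirit (the appendix result would then rest on the same stopped-semigroup machinery whose limit it is supposed to justify).

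The paper avoids this by using a \emph{time} discretisation rather than a spatial one: it rewrites $z$ via the APE as $q(x)+\inf_\tau\ee_x\{\mu(f)\tau+g(X_\tau)\}$ with $g\ge 0$, approximates by the discrete-time problem $v_\delta$ over stopping times with values in $\delta\bN$, proves $v_\delta\in C(E)$ by squeezing between monotone iterates of the one-step Bellman operator (here the uniform integrability of $q(X_t)$ in \eqref{A1} is what closes the gap between the upper and lower iterates), and then shows $v_\delta\to v$ uniformly on compacts using only the standing Feller/$C_0$ assumptions. No exit times or stopped semigroups enter.

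Two further points. First, your derivation of optimality of $\tau^*$ from \eqref{aeq3} with $\sigma=\tau^*$ is incomplete: substituting $z(X_{\tau^*})=w(X_{\tau^*})$ only yields $z(x)=\inf_{\tau}\ee_x\{\int_0^{\tau\wedge\tau^*}f+w(X_{\tau\wedge\tau^*})\}$, which does not by itself identify $\tau^*$ as a minimiser. The paper instead passes through the finite-horizon problems $z_T$ with optimal times $\tau_T\uparrow\tau^*$ and uses quasi-left continuity. Second, your ``$\ge$'' direction of \eqref{aeq3} via the strong Markov property is delicate because the post-$\sigma$ residual time need not be a stopping time for the natural filtration of the shifted process; the paper sidesteps this by first establishing that $m(t)=\int_0^t f(X_s)\,ds+z(X_t)$ is a submartingale (a by-product of the discrete approximation) and then applying optional sampling.
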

\begin{proof}
By \eqref{A1} we can write
\begin{equation}\label{ap2}
z(x)=q(x)+\inf_\tau \ee_x\left\{\mu(f)\tau+w(X_\tau)-q(X_\tau)\right\}.
\end{equation}
Since $q(x)\leq K$ for $x\in E$, to show continuity of $z$ it suffices to prove continuity of the function
\begin{equation}\label{ap3}
v(x)=\inf_{\tau} \ee_x\left\{\mu(f)\tau+g(X_\tau)\right\},
\end{equation}
where $g(x)=-q(x)+K+w(x)+\|w\|\geq 0$.
To show continuity of $v$ we consider discrete time problem
\begin{equation}\label{ap4}
v_\delta(x)=\inf_{\tau\in {\cal T}_\delta} \ee_x\left\{\mu(f)\tau+g(X_\tau)\right\},
\end{equation}
where ${\cal T}_\delta$ is a family of stopping times taking values in $\left\{0,\delta,\ldots,n\delta, \ldots\right\}$.
Consider Bellman equation corresponding to \eqref{ap4}. We are looking for a function $r$ such that
\begin{equation}\label{ap5}
r(x)=(\mu(f)\delta+P_\delta r(x))\wedge g(x):=T_\delta r(x).
\end{equation}
Notice that by \eqref{A1} function $q$ is continuous and for each $n$ also $P_\delta^n q$ is continuous.
Consider two sequences $(\underbar{r}_n)$, $(\bar{r}_n)$ of functions approximating solutions to \eqref{ap5} defined as follows
$\underbar{r}_1(x)=T_\delta (0)(x)$, $\underbar{r}_2(x)=T_\delta \underbar{r}_1(x)$, $\bar{r}_1(x)=T_\delta g(x)$, $\bar{r}_2(x)=T_\delta \bar{r}_1(x)$ and inductively  $\underbar{r}_{n+1}(x)=T_\delta \underbar{r}_n(x)$ and
$\bar{r}_{n+1}(x)=T_\delta \bar{r}_n(x)$ for positive integer $n$. Now, $\underbar{r}_1(x)\geq 0$ and $\bar{r}_1(x)\leq g(x)$ and by monotonicity of $T_\delta$, we have  $\underbar{r}_{n+1}(x) \geq  \underbar{r}_n(x)$ and  $\bar{r}_{n+1}(x)\leq T_\delta \bar{r}_n(x)$
so that $\underbar{r}_{n}(x)$ is a nondecreasing sequence while  $\bar{r}_{n}(x)$ is a nonincreasing sequence of continuous functions. Let $\underbar{r}(x)=\lim_{n\to \infty} \underbar{r}_{n}(x)$ and $\bar{r}(x)=\lim_{n\to \infty} \bar{r}_{n}(x)$. Clearly $\underbar{r}$ is lower semicontinuous while $\bar{r}$ is upper semicontinuous. Moreover one can show that
\begin{equation}\label{ap6}
\underbar{r}_n(x)=\inf_{\tau \in {\cal T}_\delta} \ee_x\left\{\mu(f)(\tau\wedge n \delta)+\ind{\tau<n \delta}g(X_\tau)\right\}
\end{equation}
and
\begin{equation}\label{ap7}
\bar{r}_n(x)= \inf_{\tau \in {\cal T}_\delta} \ee_x\left\{\mu(f)(\tau \wedge n \delta)+g(X_{\tau \wedge n\delta})\right\}.
\end{equation}
In both problems \eqref{ap6} and \eqref{ap7} we can restrict ourselves to stopping times $\tau$ such that
\begin{equation}\label{ap8}
\ee_x\left\{\tau \wedge n \delta \right\}\leq {g(x) \over \mu(f)},
\end{equation}
so that letting $n\to \infty$, by Fatou lemma we have
\begin{equation}\label{ap9}
\ee_x\left\{\tau \right\}\leq {g(x) \over \mu(f)}.
\end{equation}
Therefore choosing $\vep$-optimal stopping times for $\underbar{r}_n$ satisfying \eqref{ap9} we have
\begin{equation}\label{ap10}
0\leq \bar{r}_n(x)- \underbar{r}_n(x)\leq \vep + \ee_x\left\{\ind{\tau \geq n\delta} g(X_{n\delta})\right\}
\end{equation}
and since by \eqref{ap9} $\prob_x\left\{\tau \geq n\delta\right\} \leq {g(x)\over \mu(f)n\delta}$ using uniform integrability of $q(X_{n\delta})$ and therefore also of $g(X_{n\delta})$ we obtain that $\underbar{r}(x)=\bar{r}(x)$ is continuous and coincides with $v_{\delta}(x)$.
Let now $\tau$ a $\vep$-optimal stopping time for $v(x)$. We can restrict ourselves to stopping time $\tau$ such that \eqref{ap9} is satisfied. Let

$\tau_\delta=\inf\left\{(n+1)\delta: n\delta < \tau \leq (n+1)\delta, n=0,1,\ldots \right\}$ and $\tau_\delta=0$, whenever $\tau=0$. Then
\begin{eqnarray}\label{ap11}
&&0\leq v_\delta(x)-v(x)\leq \vep + \mu(f)\delta + \nonumber \\
&&\ee_x\left\{w(X_{\tau_\delta})-w(X_\tau) + q(X_\tau)-q(X_{\tau_\delta})\right\} \leq \nonumber \\
&& \vep + (\mu(f)+2\|f\|)\delta + \ee_x\left\{\ind{\tau\leq T} (w(X_{\tau_\delta})-w(X_\tau)) + \ind{\tau > T} 2\|w\|\right\}
\end{eqnarray}
since by the definition of $q$ we have that $\ee_x\left\{q(X_\tau)-q(X_{\tau_\delta})\right\}\leq \delta 2\|f\|$.
For $x$ from a compact set $K$ and given $\vep>0$ one can find by Proposition 2.1 of  \cite{PalSte2010} $R>0$ such that (see also \eqref{eq13'})
\begin{equation}\label{ap12}
\sup_{x\in K}\prob_x\left\{\exists_{t\leq T} \ \rho(x,X(t))\geq R\right\}\leq \vep,
\end{equation}
where $\rho$ is the metric on $E$. Furthermore for a given compact set $B(K,R)=\left\{y: \rho(y,K)\leq R\right\}$ for a given $\vep>0$ and $\gamma>0$ by Proposition 6.4 of \cite{BasSte2018} there is $\kappa>0$ such that
\begin{equation}\label{ap13}
\sup_{x\in B(K,R)} \prob_x\left\{\exists_{t\leq \kappa} \ \rho(x,X(t))\geq \gamma\right\}\leq \vep.
\end{equation}
With the use of \eqref{ap12} and \eqref{ap13} we can evaluate for $\delta\leq \kappa$
\begin{eqnarray}\label{ap14}
&&|\ee_x\left\{\ind{\tau\leq T} (w(X_{\tau_\delta})-w(X_\tau))\right\}|\leq \nonumber \\
&&|\ee_x\left\{\ind{\tau\leq T}\ind{X_\tau\in B(K,R)} (w(X_{\tau_\delta})-w(X_\tau))\right\}|+
 2\|w\| \vep \leq \nonumber \\
&& |\ee_x\left\{\ind{\tau\leq T}\ind{X_\tau\in B(K,R)}\ind{\rho(X_\tau,X_{\tau_\delta})\leq \gamma}
 (w(X_{\tau_\delta})-w(X_\tau))\right\}| + 4\|w\| \vep .
\end{eqnarray}
By continuity of $w$ it is uniformly continuous in a compact set so that for a sufficiently small $\gamma$ we have that
\begin{equation}\label{ap15}
|\ee_x\left\{\ind{\tau\leq T}\ind{X_\tau\in B(K,R)}\ind{\rho(X_\tau,X_{\tau_\delta})\leq \gamma}
 (w(X_{\tau_\delta})-w(X_\tau))\right\}|\leq \vep.
\end{equation}
Summarizing \eqref{ap11}, \eqref{ap14} and \eqref{ap15} taking also into account \eqref{ap9}
we finally obtain
\begin{equation}
0\leq v_\delta(x)-v(x)\leq \vep + (\mu(f)+2\|f\|)\delta + 4\|w\| \vep + \vep + 2\|w\|{g(x) \over \mu(f)T},
\end{equation}
so that $v_\delta(x)\to v(x)$, as $\delta\to 0$, uniformly in $x$ from compact sets. Consequently $v$ and then also $z$ is a continuous function. This completes the first part of the proof of Theorem \ref{thma2}.

Consider a finite horizon stopping problem
\begin{equation}\label{ap16}
z_T(x):=\inf_\tau \ee_x\left\{\int_0^{\tau \wedge T}f(X_s)ds + w(X_{\tau\wedge T})\right\}.
\end{equation}
By Corollary 1 of \cite{Ste2011} we have that $z_T\in C(E)$ and therefore an optimal stopping $\tau_T$ time is of the form
\begin{equation}\label{ap17}
\tau_T=\inf\left\{s\geq 0: z_{T-s}(X_s)\geq w(X_s)\right\}.
\end{equation}
Clearly $z_T(x)$ is decreasing to $z(x)$, as $T\to \infty$ and since both functions are continuous,\ by Dini's lemma we have convergence uniform on compact sets. Stopping times $\tau_T$ are increasing in $T$ and $\tau_T\leq \tau^*$.
We know that we can restrict ourselves in \eqref{aeq1} to stopping times $\tau$ such that
\begin{equation}\label{ap18}
\ee_x\left\{\tau\right\}\leq {\|w\|-q(x)+K \over \mu(f)}.
\end{equation}
Clearly $\tau^*$ also satisfies \eqref{ap18}. Furthermore $\tau_T\to \tau^*$ as $T\to \infty$.
Then we have, using quasilefcontinuity of $(X_t)$ (see Theorem 3.13 of \cite{Dyn1965}) and \eqref{ap18}
\begin{equation}\label{ap19}
z_T(x)=\ee_x\left\{\int_0^{\tau_T}f(X_s)ds + w(X_{\tau_T})\right\}\to \ee_x\left\{\int_0^{\tau^*}f(X_s)ds + w(X_{\tau^*})\right\}
\end{equation}
as $T\to \infty$, which means that $\tau^*$ is an optimal stopping time for $z$.

It remains to show \eqref{aeq3}. Notice first that by \eqref{ap5} $m_\delta(n\delta)=\mu(f)n\delta + v_\delta(X_{n\delta})$ is a submartingale and letting $\delta\to 0$ also $\hat{m}(t)=\mu(f)t+v(t)$ is a right continuous submartingale. Consequently also $m(t)=\int_0^t f(X_s)ds + z(X_t)$ is a right continuous submartingale. Therefore for any bounded stopping time $\tau$ and stopping time $\sigma$ we have $\ee_x\left\{m(\tau)|F_{\tau\wedge \sigma}\right\}\geq m(\tau\wedge \sigma)$ and
\begin{eqnarray}\label{ap20}
&&\int_0^{\tau\wedge \sigma} f(X_s)ds + z(X_{\tau \wedge \sigma})\leq \ee_x\left\{\int_0^\tau f(X_s)ds + z(X_{\tau})|F_{\tau\wedge \sigma}\right\}\leq  \nonumber \\
&&\ee_x\left\{\int_0^\tau f(X_s)ds + \ind{\tau < \sigma} z(X_{\tau})+\ind{\sigma\leq\tau}w(X_\tau)|F_{\tau\wedge \sigma}\right\}
\end{eqnarray}
Subtracting from both sides of  $\ind{\tau < \sigma} z(X_{\tau})$ and then adding to both sides
$\ind{\tau < \sigma} w(X_{\tau})$ we obtain that
\begin{equation}\label{ap21}
\ee_x\left\{\int_0^\tau f(X_s)ds + w(X_{\tau})\right\}\geq \ee_x\left\{\int_0^{\tau\wedge \sigma} f(X_s)ds + \ind{\tau < \sigma} w(X_{\tau})+ \ind{\sigma\leq \tau}z(X_\sigma) \right\}.
\end{equation}
Since in the definition of $z$ we may restrict to stopping times $\tau$ such that \eqref{ap18} is satisfied such stopping times can be approximated by bounded stopping times we have
that
\begin{equation}
z(x)\geq \inf_\tau \ee_x\left\{\int_0^{\tau\wedge \sigma} f(X_s)ds + \ind{\tau < \sigma} w(X_{\tau})+ \ind{\sigma\leq \tau}z(X_\sigma) \right\}
\end{equation}
for any stopping time $\sigma$.
The inverse inequality is rather obvious so that we finally obtain \eqref{aeq3}.
\end{proof}

\bibliographystyle{siamplain}

\end{document}